\renewcommand\normalsize{%
    \@setfontsize\normalsize{11.7}{14pt plus .3pt minus .3pt}%
    \abovedisplayskip 10\p@ \@plus4\p@ \@minus4\p@
    \abovedisplayshortskip 6\p@ \@plus2\p@
    \belowdisplayshortskip 6\p@ \@plus2\p@
    \belowdisplayskip \abovedisplayskip}
\renewcommand\small{%
    \@setfontsize\small{9.5}{12\p@ plus .2\p@ minus .2\p@}%
    \abovedisplayskip 8.5\p@ \@plus4\p@ \@minus1\p@
    \belowdisplayskip \abovedisplayskip
    \abovedisplayshortskip \abovedisplayskip
    \belowdisplayshortskip \abovedisplayskip}
\renewcommand\footnotesize{%
    \@setfontsize\footnotesize{8.5}{9.25\p@ plus .1pt minus .1pt}
    \abovedisplayskip 6\p@ \@plus4\p@ \@minus1\p@
    \belowdisplayskip \abovedisplayskip
    \abovedisplayshortskip \abovedisplayskip
    \belowdisplayshortskip \abovedisplayskip}
\theoremstyle{plain}
\newtheorem{theorem}{Theorem}[section]
\newtheorem{corollary}[theorem]{Corollary}
\newtheorem{lemma}[theorem]{Lemma}
\theoremstyle{definition}
\newtheorem{definition}[theorem]{Definition}
\newtheorem*{remark}{Remark}
\newcommand{\R}{\mathbb{R}}
\newcommand{\N}{\mathbb{N}}
\newcommand{\Deg}{\operatorname{Deg}}
\newcommand{\diam}{\operatorname{diam}}
\newcommand{\supp}{\operatorname{supp}}
\newcommand{\argmin}{\operatorname{argmin}}
\newcommand{\Lip}{\operatorname{Lip}}
\newcommand{\capa}{\operatorname{cap}}
\newcommand{\cl}{\operatorname{cl}}
\newcommand{\Ric}{\operatorname{Ric}}
\newcommand{\diamess}{\operatorname{diam}_{\operatorname{obs}}}
\newcommand{\D}{\Deg_{\max}}
\newcommand{\dc}{d_{\operatorname{comb}}}
\newcommand{\eps}{\varepsilon}
\newcommand{\as}{\alpha_{\operatorname{spectral}}}
\newcommand{\amod}{\alpha_{\operatorname{mod}}}
\newcommand{\Uint}{U_{\operatorname{int}}}
\newcommand{\Hidden}[1]{}
\DeclareMathOperator{\Ent}{Ent}
\begin{document}

\title[Ollivier curvature, isoperimetry, and Log-Soblev inequality]{Ollivier curvature, Isoperimetry, concentration, and Log-Sobolev inequalitiy}
\author{
Florentin M\"unch
}
\date{\today}
\maketitle

\begin{abstract}
We introduce a Laplacian separation principle for the the eikonal equation on Markov chains. As application, we prove an isoperimetric concentration inequality for Markov chains with non-negative Ollivier curvature. That is, every single point from the concentration profile yields an estimate for every point of the isoperimetric estimate. Applying to exponential and Gaussian concentration, we obtain affirmative answers to two open quesions by Erbar and Fathi.
Moreover, we prove that the modified log-Sobolev constant is at least the minimal Ollivier Ricci curvature, assuming non-negative Ollivier sectional curvature, i.e., the Ollivier Ricci curvature when replacing the $\ell_1$ by the $\ell_\infty$ Wasserstein distance. This settles a recent open Problem by Pedrotti.
We give a simple example showing that non-negative Ollivier sectional curvature is necessary to obtain a modified log-Sobolev inequality via positive Ollivier Ricci bound. 
This provides a counterexample to a conjecture by Peres and Tetali.
\end{abstract}

\tableofcontents


\section{Introduction}

In this article, we answer a variety of open questions regarding isoperimetric, functional and concentration inequalities on Markov chains with non-negative Ricci curvature:

\begin{itemize}
\item Peres,Tetali: Modified Log-Sobolev under positive Ollivier curvature
\item Ollivier, Problem M: Log Sobolev under some curvature bound 
\item Ollivier, Problem P: Sectional curvature using $\ell_\infty$ Wasserstein distance
\item Pedrotti:  Modified Log-Sobolev using $\ell_\infty$ Wasserstein curvature
\item Erbar, Fathi: Spectral gap via exponential concentration
\item Erbar, Fathi: Modified log-Sobolev via Gaussian concentration
\end{itemize}
For the first question, we give a counterexample, and all other questions are answered affirmatively.
Discrete Ricci curvature has sparked remarkable interest in numerous mathematical communities:

\begin{itemize}
\item Markov chain mixing, functional inequalities and cutoff \cite{salez2023cutoff,fathi2015curvature,
eldan2017transport,caputo2009convex,riekert2022convergence}

\item Multi-particle systems such as Glauber dynamics \cite{blanca2022mixing,erbar2017ricci,paulin2016mixing,
villemonais2020lower,holmes2014curvature}

\item Geometric group theory \cite{berestycki2014cutoff,siconolfi2020coxeter,
keisling2021medium,bar2022conjugation,
taback2023conjugation}

\item Discrete topology \cite{kempton2021homology,
saucan2017network, ni2015ricci,knill2014coloring,forman2003bochner}

\item Quantum relativity \cite{loll2019quantum,tee2021canonical,tee2021enhanced,
gorard2020some,loll2022quantum} 

\item Machine learning and neural networks \cite{anand2022topological,wee2021ollivier,
topping2021understanding,li2022curvature,ye2019curvature} 

\item Data analysis \cite{weber2016forman,sia2019ollivier,
sandhu2015analytical,samal2018comparative,
boguna2021network}

\end{itemize}

Indeed, various non-equivalent notions of discrete Ricci curvature have been introduced such as Forman curvature for cell complexes \cite{forman2003bochner} based on a Bochner-Weizenböck formula, Coarse Ricci curvature known as Ollivier curvature based on optimal transport \cite{ollivier2009ricci,lin2011ricci}, Bakry Emery curvature based on a Bochner formula \cite{schmuckenschlager1998curvature,lin2010ricci}, and entropic curvature based on convexity of the entropy \cite{erbar2012ricci,mielke2013geodesic}.
None of these curvatures coincide except Forman and Ollivier curvature when choosing the 2-cells to maximize the Forman curvature, see e.g. \cite{jost2019Liouville} for examples of different Ollivier and Bakry Emery curvature.
Among all curvature notions, Ollivier curvature seems the most popular due to its simplicity and its elegant geometric interpretation. Specifically, the Ollivier curvature is bounded from below by $K \in \R$ iff
\[
\Lip(P_t f) \leq e^{-Kt} \Lip(f),
\]
see \cite{munch2017ollivier} where $P_t = e^{\Delta t}$ is the heat semigroup.
One remarkable implicit feature of the Lipschitz decay is that the underlying distance for the Lipschitz constant can be chosen independently of the Markov chain.
Indeed, this observation seems to give rise to a completely unexplored branch of Riemannian geometry:
Given a (potentially weighted) manifold $M$ with two Riemannian metrics $g_1,g_2$.
\begin{itemize}
\item Define Laplace Beltrami with respect to $g_1$ (potentially with weight)
\item Define gradients and Lipschitz constant with respect to $g_2$
\end{itemize}
The Ricci curvature bounds can now be defined either via Lipschitz decay, or via Bakry Emery calculus which gives rise to a whole universe of interesting open questions, e.g.,
\begin{itemize}
\item How does the curvature localize, i.e., how to get pointwise curvature quantities from the global bounds?
\item Under which conditions do the Lipschitz decay curvature and Bakry Emery curvature coincide?
\item Which classical results regarding Ricci curvature can be generalized to the case of having two different metrics?
\item What are natural definitions of scalar, sectional and Riemannian curvature in this context?
\item Are there interesting Ricci flows, deforming the metrics differently?
\end{itemize}

We now discuss the known relations between Ricci curvature, isoperimetry and log-Sobolev inequalities.
In smooth and discrete settings, there is hierarchy of properties with concentration inequalities and isoperimetric inequalities at its ends. Specifically,
\begin{align*}
\mbox{Isoperimetric inequalities} &\Rightarrow \mbox{Functional inequalities} \\&\Rightarrow \mbox{Transport entropy inequalites}\\& \Rightarrow  \mbox{Concentration inequalities},
\end{align*}
see e.g. \cite{milman2012properties} for the smooth case and \cite{fathi2015curvature} for the discrete case. In the seminal works of Milman \cite{milman2009role,milman2010Isoperimetric}, it is shown that in the smooth case, the hierarchy can be reversed in case of non-negative Ricci curvature. In particular, exponential concentration of measure implies a lower bound on the Cheeger constant, and Gaussian concentration implies Gaussian isoperimetry. In \cite[Conjecture~6.9 and 6.10]{erbar2018poincare}, Erbar and Fathi ask whether the same implications hold true in the discrete setting.
In the smooth setting, the log Sobolev constant can be lower bounded by the minimal Ricci curvature
\cite{cavalletti2017sharp}.
This bound is sharp and equality is attained iff the space splits out a 1-dimensional Gaussian space \cite{ohta2020equality}.

In the study of discrete Markov chains, log-Sobolev inequalities are of fundamental importance  as they provide a crucial tool to investigate mixing times and cutoff behavior \cite{diaconis1996logarithmic}.

It was conjectured by Tetali and Peres that in the discrete setting, lower Ollivier curvature bounds imply modified log-Sobolev inequalities \cite[Conjecture~3.1]{eldan2017transport}, \cite[Conjecture~4]{fathi2019quelques}, \cite[Remark~1.1]{blanca2022mixing}, \cite[Conjecture~5.25]{pedrotti2023contractive}.

There is a wide range of literature attempting to solve the conjecture: In \cite{eldan2017transport,fathi2015curvature}, it was tried to tackle the conjecture via transport inequalities. In \cite{blanca2022mixing}, log-Sobolev inequalities under positive curvature bounds have been proven for Glauber dynamics.
In \cite{johnson2015discrete}, a log Sobolev inequality was proven for birth death chains with constant birth rate, assuming a Bakry Emery curvature bound.

Recently, Pedrotti conjectured that positive Ollivier curvature implies a modified log-Sobolev inequality if additionally assuming non-negative $\ell_\infty$ Wasserstein curvature (i.e., the Ollivier curvature when replacing the $\ell_1$ by the $\ell_\infty$ Wasserstein curvature), see the discussion after \cite[Conjecture~5.25]{pedrotti2023contractive}. In \cite[Problem~P]{ollivier2010survey}, Ollivier asks whether the $\ell_\infty$ Wasserstein curvature has any interesting applications. As (unmodified) log-Sobolev inequalities seemed completely out of reach, Ollivier asked if one could get at least exponential decay of the entropy under the heat flow \cite[Problem~M]{ollivier2010survey}.

While it was completely open in case of Ollivier curvature bounds, modified log-Sobolev inequalities have been proven under entropic curvature bounds \cite{erbar2018poincare,erbar2012ricci}, and under the exponential curvature dimension condition $CDE'$ \cite{yong2017log}. However, the entropic curvature and the $CDE'$ curvature condition are highly non-linear as the logarithm is appearing. In particular, they are practically not computable for larger networks (i.e., networks with more than two vertices), although useful bounds have been given for important classes of Markov chains.

It turns out that under Ollivier or Bakry Emery curvature conditions, estimates on graphs involving the logarithm are notoriously hard to prove.  This has been impressively demonstrated in the papers on Li-Yau inequalities on graphs, which all but one have to assume a non-linear modification of the Bakry Emery condition \cite{bauer2015li,horn2014volume,munch2014li,
dier2017discrete,gong2019li,qian2017remarks,weber2023li,
lippner2016li,krass2022li,horn2019spacial}. The only exception is \cite{munch2019li} in which a Li-Yau inequality under the standard Bakry Emery condition was proven for a modified heat equation so that the appearance of the logarithm was sneakily prevented.

While log-Sobolev inequalities are hard to obtain from Ollivier curvature bounds, the weaker Poincare inequality is well known to be true. Specifically, if $K$ is a lower Ollivier curvature bound, then
\[
\lambda_1 \geq K,
\]
see \cite{ollivier2009ricci}, 
and in case of non-negative Ollivier curvature,
\[
\lambda_1 \geq \frac{\log(2)}{\diam^2},
\]
where $\lambda$ is the smallest positive eigenvalue of the graph Laplacian, see \cite{munch2019non}.
A similar estimate has been shown in \cite{lin2010ricci} in case of non-negative Bakry Emery curvature.
The leading question for this article was, whether $\lambda_1$ in the above estimates can be replaced by the (modified) log-Sobolev
constant.
It turns out that the methods of the proofs also yield surprisingly strong isoperimetric inequalities.

\subsection{Main results for positive curvature}

We give a simple example that the Peres-Tetali conjecture is wrong, i.e., we construct a sequence of Markov chains with uniformly positive Ollivier and Bakry Emery curvature, but the modified log-Sobolev constant tends to zero, see Section~\ref{sec:CounterExample}.
The Markov chain is a birth death chain on three vertices where the transition rate from an outer to the middle vertex goes to zero and the remaining transition rates stay fixed. It turns out that with appropriate choice of the remaining transition rates, the Markov chain has uniformly positive Ollivier and Bakry Emery curvature.
Moreover, based on this birth death chain, we construct a combinatorial graph with uniformly positive Ollivier curvature, but arbitrarily small modified log-Sobolev constant.

Moreover, we present a method to repair the Peres-Tetali conjecture by employing a notion of Ollivier sectional curvature based on the $\ell_\infty$ Wasserstein distance instead of its $\ell_1$ version (for the motivation of the sectional curvature, see \cite[Problem P]{ollivier2010survey}).
Specifically, assuming non-negative Ollivier sectional curvature, we prove in Theorem~\ref{thm:modLogSobPosCurv} that 
\[
\amod \geq \inf_{x\sim y} \kappa(x,y)
\]
where $\kappa$ is the Ollivier Ricci curvature and $\amod$ the modified log-Sobolev constant.
This settles an open question of Pedrotti (see the discussion after \cite[Conjecture~5.25]{pedrotti2023contractive}).

\subsection{Main results for non-negative curvature}
The deepest and most innovative results of this article are concerning non-negative Ollivier Ricci curvature and its consequences for the isoperimetric profile.
Specifically, we present a Laplacian separation principle
for the eikonal equation which, at first glance, looks just like a party trick, see Section~\ref{sec:LaplaceSeparation}. 
However, this method seems entirely new (even in the case of Riemannian manifolds) and it has surprisingly strong consequences for the isoperimetric profile.
More precisely, in Theorem~\ref{thm:IsoperimetryConcentration}, we prove that for every $\eps \leq \frac 1 8$ and every vertex subset $W$ with $m(W) \leq \frac 1 2$,
\[
|\partial W| \geq P_0\frac{(m(W) \wedge \eps) \log (1/\eps)}{6\diamess^{(\eps)}}.
\]
Here, $m$ is the reversible probability measure, $P_0$ is the minimal transition rate, $|\partial W|$ is the size of the edge boundary, and $\diamess$ is the observable diameter, i.e.,
\[
\diamess^{(\eps)} = \sup_{\substack{m(A) \geq \eps\\m(\cl(B)) \geq \eps}} d(A,B)
\]
where $\cl(B)$ is the closure containing both $B$ and its outer vertex boundary.
The observable diameter as a function of $\eps$ is, up to constant factors, the inverse of the concentration profile, see \cite{ozawa2015estimate}. Therefore, the above estimate gives an intimate relation between the isoperimetric profile (depending on $|\partial W|$ and $m(W)$), and the concentration profile (depending on $\eps$ and $\diamess^{(\eps)}$).
Particularly, every single point of the concentration profile (with the only restriction that $\eps\leq 1/8$) yields an estimate for every point of the isoperimteric profile.
This is indeed a conceptual improvement compared to the seminal work of Milman \cite{milman2010Isoperimetric}, where the full concentration profile, and particularly its tail is needed to obtain isoperimetric estimates.

We now discuss the consequences of the above isoperimetry-concentration estimate.
Plugging in $\eps =m(W)/4$, we obtain asymptotically sharp estimates for the log-Cheeger constant,
\[
h_{\log} := \inf_{m(W)\leq 1/2} \frac{|\partial W|}{ - m(W) \log(m(W))} \geq \frac{P_0}{24\diam(G)},
\] 
see Corollary~\ref{cor:IsoperimetryConcentrationLargeEps}.
This seems to be the first result of this kind for discrete Ricci curvature bounds. The result is asymptotically sharp for uniformly biased and unbiased birth death chains.

When plugging in $\eps=1/8$, we obtain the following estimate for the classical Cheeger constant,
\[
h := \inf_{m(W) \leq 1/2} \frac{|\partial W|}{m(W)}  \geq \frac
{P_0}{12 \diamess^{(1/8)}},
\]
see Corollary~\ref{cor:hlargerdiamess}.
This means that the Cheeger constant can be lower bounded in terms of a single point of the concentration profile.
This indeed exceeds Erbar and Fathi's conjecture \cite[Conjecture~6.9]{erbar2018poincare} asking whether the Cheeger constant and spectral gap can be bounded assuming exponential concentration, see Section~\ref{sec:ErbarFathiConjecture}.
Indeed, the reverse inequality is also true, namely
\[
h \leq \frac
{57\D}{12 \diamess^{(1/8)}}
\]
where $\D=\sum_{y\neq x} P(x,y)$ is the maximal vertex degree, see Theorem~\ref{thm:hsmallerdiamess}.

We also obtain a lower bound of $|\partial W|/m(W)$ in terms of the diameter of $W$. More precisely, in Theorem~\ref{thm:internalDiameter}, we prove that
\[
\frac{|\partial W|}{m(W)(1-m(W))} \geq \frac{P_0}{\diam(\cl(W))}. 
\]

Another application of our general isoperimetric concentration inequality is that we can prove Gaussian isoperimetry assuming Gaussian concentration.
Specifically, assume that for all $A$ with $m(A) \geq \frac 1 2$,
\[
m(A_r) \leq \exp(-\rho r^2)
\]
where $A_r = \{x:d(A,x) \leq r\}$.
Then,
\[
h_{\sqrt {\log}} := \inf_{m(W)\leq \frac 1 2} \frac{|\partial W|}{m(W)\sqrt{\log(1/m(W))}}  \geq \frac{P_0}{48}\sqrt{\rho}.
\]

The $\sqrt{\log}$ Cheeger constant (or Gaussian isoperimetric constant) is tightly related to the log-Sobolev constant via log-Cheeger Buser inequalities, see \cite[Theorem~4.4]{klartag2015discrete} and \cite[Remark 5]{houdr2001mixed}, i.e., $Ch_{\sqrt{\log}}^2/P_0 \geq \alpha \geq c h_{\sqrt{\log}}^2$, where the former estimate requires non-negative Ricci curvature. By this, we provide an answer to \cite[Conjecture~6.10]{erbar2018poincare} where Erbar and Fathi ask if Gaussian concentration implies a log Sobolev inequality.

One drawback when applying log-Cheeger estimates for obtaining log-Sobolev inequalities is that one loses a factor $P_0^2$. This is indeed avoidable when estimating the log-Sobolev constant purely in terms of the diameter.
To address this issue, we give a new general characterization of the log-Sobolev constant $\alpha$ in terms of Dirichlet eigenvalues. Specifically, in Corollary~\ref{cor:asalpha}, we show
\[
\alpha \simeq \as := \inf_{W} \theta(\lambda_W,\lambda_{W^c})
\]
where $\lambda_W$ is the smallest Dirichlet eigenvalue of the subset $W$, and $\theta$ is the logarithmic mean.
Here, $\simeq$ means coincidence up to global multiplicative constants.
Using the spectral characterization of $\alpha$, we prove in Theorem~\ref{thm:asDiam} that
\[
\alpha \simeq \as \geq \frac {P_0} {16\diam(G)^2}.
\]

Considering that under discrete curvature assumptions, only modified log-Sobolev inequalities have been established so far \cite{erbar2012ricci,erbar2018poincare,weber2021entropy,
yong2017log,johnson2015discrete,caputo2009convex,
Erbar2019EntropicCA}, it is quite remarkable that our approach yields original log-Sobolev  inequalities which are significantly stronger and have the advantage that they are closely related to the time to stationarity of random walks, and that they allow for geometric characterizations via isocapacitary and spectral profiles, as discussed in Section~\ref{sec:GeneralLogSob}.
Additionally, this addresses \cite[Problem~M]{ollivier2010survey} asking for exponential entropy decay under the heat flow which is a well known consequence of log-Sobolev inequalities.

Moreover it is surprising that we obtain the log-Sobolev inequality and isoperimetric inequalities under Ollivier curvature bounds which are easy to determine by a linear program, and not under modified Bakry Emery curvature bounds such as $CDE'$ or entropic curvature bounds which come with logarithmic terms facilitating the proof of log-Sobolev inequalities.

\section{Setup and notation}
We say $G=(V,P,d)$ is a metric (continuous time) Markov chain if
\begin{itemize}
\item The vertex set $V$ is a finite set,
\item The Markov kernel $P:V^2 \to [0,\infty)$ has symmetric support,
\item The distance function $d:V^2 \to [0,\infty]$ satisfies 
\[
d(x,y)=\inf \left\{\sum_{k=1}^n d(x_k,x_{k-1}):x=x_0 \sim \ldots \sim x_n = y \right\}
\]

where we write $x\sim y$ iff $P(x,y)>0$.
\end{itemize}
In general, we do not require that the $P(x,\cdot)$ sums to one. However, when we refer to lazy Markov chains, we require $\sum_{y}P(x,y)=1$ and $P(x,x) \geq \frac 1 2$ for all $x \in V$.
The last condition above means that $d$ is a discrete path distance, i.e., the distance between between any two vertices is realized by the length of a path with respect to the Markov chain.

We always assume that the graph is connected, i.e., $d(x,y)$ is finite for all $x,y$.
Then, the Markov chain is irreducible, i.e., there exists a unique probability measure $m$ on $V$, called invariant measure, such that for all $x \in V$,
\[
\sum_y m(x)P(x,y) = \sum_y m(y)P(y,x).
\]
If for all $x,y \in V$, we have $m(x)P(x,y) = m(y)P(y,x)$, then the Markov chain is called reversible. We sometimes write $w(x,y)=m(x)P(x,y)$, referring to the symmetric edge weight of a weighted graph.
We will always assume that the Markov chain is reversible.
The measure $m$ induces a scalar product on $\R^V$ via
\[
\langle f,g \rangle := \sum_x f(x)g(x)m(x).
\]

We denote the 1-Lipschitz functions by
\[
\Lip(1) := \{f \in \R^V: |f(y)-f(x)| \leq d(x,y)\}.
\]
The Laplacian is given by $\Delta: \R^V \to \R^V$,
\[
\Delta f(x) = \sum_y P(x,y)(f(y)-f(x)).
\]
It is well known that $\Delta$ is self-adjoint if and only if the Markov chain is reversible.
For reversible Markov chains, the corresponding  Dirichlet form is given by
\[
\mathcal E(f,g) = - \langle \Delta f,g \rangle.
\]
For simplicity, we write $\mathcal E(f):=\mathcal E(f,f)$.
For introducing the Ollivier curvature, we follow \cite{munch2017ollivier}.
For $x\sim y$ the Ollivier curvature $\kappa(x,y)$ is given by
\[
\kappa(x,y) := \inf_{\substack{f \in \Lip(1) \\ f(y)-f(x)=d(x,y)}} \frac{\Delta f(x) -\Delta f(y)}{d(x,y)}.
\]
If $P$ is a lazy Markov kernel, i.e., if $P(x,x) \geq \frac 1 2$, and if $\sum_y P(x,y)=1$ for all $x \in V$, the Ollivier curvature coincides with the expression introduced by Ollivier, namely
\[
\kappa(x,y) = 1 - \frac {W(P(x,\cdot),P(y,\cdot))}{d(x,y)}
\]
where $W$ is the 1-Wasserstein distance, see \cite{bourne2018ollivier,loisel2014ricci}.
We remark that the Ollivier curvature of an edge coincides with the maximal Forman curvature of this edge where the maximum is taken over all cell complexes having the graph as 1-Skeleton \cite{jost2021characterizations}.

\section{General characterization of log-Sobolev constants}
\label{sec:GeneralLogSob}

We first give the definitions of the log-Sobolev constant and its modified version.
The Log-Sobolev constant $\alpha$ is defined as
\[
\alpha := \inf_{\|f\|_2=1}  \frac{ \mathcal E(f,f)}{\Ent(f^2)}
\]
where for positive $f$ with  $\|f\|_1=1$,
\[
\Ent(f) := \langle f,\log f \rangle.
\]

It is easy to check that for the complete graph on $n$ vertices (with normalized graph Laplacian), $\alpha$ goes to zero as $n$ increases.
This is for some applications not desirable which was one motivation to introduce a modified log Sobolev constant
\[
\amod := \inf_{\|f\|_1=1} \frac{\mathcal E(f,\log f)}{\Ent(f)}.
\]
It is well known that
\[
4 \alpha \leq  \amod \leq 2\lambda
\]
where $\lambda$ is the smallest positive eigenvalue of $-\Delta$, see e.g. \cite{bobkov2006modified}.
By a variational argument, if $2\amod<\lambda$, then the minimizer for $\amod$ satisfies
\[
\frac {\Delta f} f + \Delta (\log f) = -\amod \log f,
\]
see \cite[Theorem~6.5]{bobkov2006modified}.
If $\amod = 2\lambda$, then the expression for $\amod$ is minimized by $1+\eps f$ for $\eps \to 0$ where $f$ is an eigenfunction to $\lambda$.

The log Sobolev constant is closely related to the  time to stationarity $\tau$ of a random walk. By a famous result of Diaconis and Saloff-Coste \cite{diaconis1996logarithmic}, we have
\[
\frac 1 {2\alpha} \leq \tau \leq \frac {4 + \log \log (1/\pi_*)}{4\alpha}
\] 
where 
$\tau := \inf \{t: \sup_x \|e^{\Delta t} \delta_x - 1 \|_2 \leq \frac 1 e\}.$
Moreover, the log Sobolev constant $\alpha$ allows for various geometric and spectral characterizations as discussed below.

In contrast, the modified log-Sobolev constant $\amod$ can be seen as more analytic in nature as it precisely measures the decay of entropy under the heat equation \cite{bobkov2006modified}.

In the next subsection, we discuss a capacitary characterization of $\alpha$ from \cite{schlichting2019poincare}. Afterwards, we give a characterization of $\alpha$ in terms of Dirichlet eigenvalues. This seems to be new.

\subsection{Capacity and Log-Sobolev constant}

The log-Sobolev constant is closely related to the iso-capacitary profile which can be seen as a refined version of the isoperimetric profile. 
Here, 'refined' means that that isocapicatary constants give upper and lower bounds to the spectral gap and log-Sobolev constant up to a global constant, while isoperimetric constants yield similar estimates via Cheeger-Buser inequality only up to a constant depending on the vertex degree.
We now give the details.
Let $A,B \subset V$ be disjoint subsets.
We define
\[
\capa(A,B) := \min \{\mathcal E(f): f|_A=0, f|_B=1\}.
\]
We assume $m(V)=1$.
Let
\[
\alpha_{\capa} := \inf_{\substack{A,B \subset V \\m(B) \geq 1/2}}\frac{\capa(A,B)}{-m(A)\log \left(1 + \frac {e^2} {m(A)}\right)}
\]

The following theorem is given in
\cite[Corollary~2.11]{schlichting2019poincare}.
\begin{theorem}\label{thm:acapalpha}
There exist universal constants $c,C$ such that
\[
c\alpha_{\capa} \leq \alpha \leq C \alpha_{\capa}.
\]
\end{theorem}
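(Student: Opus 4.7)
The plan is to establish the two inequalities separately using the standard Maz'ya capacitary framework adapted to the discrete Dirichlet form.

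For the easier direction $\alpha \leq C\alpha_{\capa}$, I would use the capacity minimizer as a test function in the log-Sobolev functional. Fix $A,B$ with $m(B)\geq 1/2$ and let $f$ be the minimizer realising $\capa(A,B)$, truncated so that $f|_A=0$, $f|_B=1$, and $0\leq f\leq 1$ (truncation does not increase $\mathcal{E}$). Scale invariance of $\mathcal{E}(f)/\Ent(f^2)$ gives
\[
\alpha \leq \frac{\mathcal{E}(f)}{\Ent(f^2)} = \frac{\capa(A,B)}{\Ent(f^2)},
\]
so it suffices to show $\Ent(f^2) \geq c\,m(A)\log(1+e^2/m(A))$. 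Splitting the entropy sum over $A$, $B$, and the intermediate region, and using that $\|f\|_2^2 \in [1/2,1]$ together with the elementary bound $t\log(t/c)\geq -c/e$ on the intermediate layer, the dominant contribution is the atom of mass $m(A)$ at $f=0$ against a normalisation of order one, which produces exactly the modulus appearing in $\alpha_{\capa}$ after matching the universal constants.

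For the harder direction $c\alpha_{\capa}\leq \alpha$, I would chain the capacitary bound across dyadic super-level sets. Given $f\geq 0$ with $\|f\|_2^2 = 1$, form $F_k := \{f^2 > \rho_0 2^k\}$, where $\rho_0$ is chosen by pigeonhole so that $m(F_0) \in [1/4,1/2]$. With $B:=F_0$ fixed as the common large set, the hypothesis yields
\[
\capa(F_k^c, B) \geq \alpha_{\capa}\cdot m(F_k)\log\!\left(1+\frac{e^2}{m(F_k)}\right)
\]
for every $k\geq 1$. A discrete Maz'ya coarea inequality of the form
\[
\mathcal{E}(f) \gtrsim \sum_{k\geq 1}(\rho_k-\rho_{k-1})\,\capa(F_k^c, B)
\]
is then obtained by writing $f^2$ as a telescoping layer decomposition $\sum_k (\rho_k\wedge f^2 - \rho_{k-1}\wedge f^2)$ and applying convexity of the Dirichlet form layer by layer. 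Summing, and recognising the right-hand side as a lower Riemann sum for $\int f^2\log(1+e^2/f^2)\,dm$ after the substitution $t=\rho_k$, delivers $\mathcal{E}(f)\gtrsim \alpha_{\capa}\,\Ent(f^2)$, as required.

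The main obstacle is the discrete coarea step. In the Riemannian setting $|\nabla f^2|^2 = 4f^2|\nabla f|^2$ splits cleanly across level sets, but on a graph an edge $x\sim y$ may jump across many dyadic layers at once, and each such crossing has to be absorbed into the capacity of the outer layer with the correct geometric weight. The specific modulus $-t\log(1+e^2/t)$ is tuned to balance the two halves: a smaller modulus forces the layer sum to diverge, while a larger one destroys the lower bound on the entropy in the easy direction. Handling the multi-layer edge crossings via a Cauchy--Schwarz redistribution, and tracking the universal constants precisely through both halves, is the delicate part of the argument.
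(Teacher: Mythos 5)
First, for context: the paper does not prove Theorem~\ref{thm:acapalpha} at all --- it imports the statement from Schlichting's work (cited there as Corollary~2.11), which is the discrete Barthe--Roberto/Maz'ya capacitary characterization of the log-Sobolev constant. So your proposal is not competing with an argument in this paper but with that reference; your overall strategy (test the capacity potential for one direction, chain dyadic level sets via a Maz'ya-type coarea inequality for the other) is indeed the strategy of that literature. However, your ``easy'' direction contains a genuine error. With the orientation you chose ($f|_A=0$, $f|_B=1$, $0\le f\le 1$), the set $A$ contributes \emph{nothing} to $\Ent(f^2)=\sum_x f^2(x)\log\bigl(f^2(x)/\|f\|_2^2\bigr)m(x)$, since the integrand vanishes where $f=0$; there is no ``atom of mass $m(A)$ at $f=0$''. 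The claimed bound $\Ent(f^2)\ge c\,m(A)\log(1+e^2/m(A))$ is in fact false: on a two-point space with $m(A)=\delta$, $m(B)=1-\delta$, the unique admissible $f$ is $1_B$, and $\Ent(f^2)=(1-\delta)\log\frac{1}{1-\delta}\sim\delta$, which misses the required $\delta\log(1/\delta)$ by an unbounded factor. The correct move is to test with $1-f$ (same Dirichlet energy), which equals $1$ on $A$ and is supported on $B^c$ with $m(B^c)\le\frac12$. Then Jensen's inequality on the support gives $\Ent\bigl((1-f)^2\bigr)\ge\|1-f\|_2^2\log 2$, and the dual formula $\sum_x w g\, m\le\Ent(w)+(\sum_x w\,m)\log(\sum_x e^{g}m)$ applied with $g=\log(1+e^2/m(A))\,1_A$ (so that $\sum_x e^{g}m=1+e^2$) yields $m(A)\log(1+e^2/m(A))\le C\,\Ent((1-f)^2)$. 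The smallness of the support of the test function --- which your orientation discards --- is exactly what produces the modulus $m(A)\log(1+e^2/m(A))$.

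The hard direction has the right skeleton but does not close as written. The sets in your capacity bound are garbled: $F_k^c$ and $B=F_0$ are not disjoint, so $\capa(F_k^c,B)$ is undefined; you want $\capa(F_k,F_0^c)$ with $m(F_0^c)\ge\frac12$, and the truncation argument naturally yields $\mathcal E(f)\ge\sum_k(\sqrt{\rho_k}-\sqrt{\rho_{k-1}})^2\,\capa(F_k,F_{k-1}^c)\ge\sum_k(\sqrt{\rho_k}-\sqrt{\rho_{k-1}})^2\,\capa(F_k,F_0^c)$ (for dyadic levels the weights are comparable to $\rho_k-\rho_{k-1}$, so this part is harmless). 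More substantively, your layer sum only controls the portion of the entropy coming from $\{f^2>\rho_0\}$: general signed $f$, the region below the median level, and the additive $O(\|f\|_2^2)$ error that the comparison $\sum_k(\rho_k-\rho_{k-1})m(F_k)\log(1+e^2/m(F_k))\gtrsim\Ent(f^2)-C\|f\|_2^2$ inevitably produces all have to be dealt with; in the standard treatment this requires splitting $f$ around a median, a Rothaus-type lemma, and a tightening step via the Poincar\'e inequality (itself a consequence of the capacitary hypothesis). None of this is unfixable, but as it stands the argument has a false step in one direction and substantial missing bookkeeping in the other.
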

For the proof, we refer to \cite{schlichting2019poincare}.
We first bring $\alpha_{\capa}$ into a more convenient form by introducing
\[
\alpha_{\capa}^{\theta} := \inf_{A,B \subset V} \theta \left(\frac{\capa(A,B)}{m(A)},\frac{\capa(A,B)}{m(B)} \right)
\]
where $\theta$ is the logarithmic mean, i.e., $\theta(s,t)=(s-t)/\log(s/t)$ for $s\neq t$ and $\theta(s,s)=s$.
We now compare $\alpha_{\capa}$ with $\alpha_{\capa}^\theta$.
We remark that most of the following proof boils down to single variable calculus, but for convenience of the reader, we give a complete proof.
\begin{lemma}\label{lem:acaptheta}The isocapacitary constants $\alpha_{\capa}$ and $\alpha_{\capa}^\theta$ satisfy
\[
\frac 1 2 \alpha_{\capa} \leq \alpha_{\capa}^\theta \leq 3 \alpha_{\capa}.
\]
\end{lemma}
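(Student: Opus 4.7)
Writing $a := m(A)$, $b := m(B)$, $c := \capa(A,B)$ and assuming $a \le b$ (by the symmetry of $\theta$ in its arguments), the positive homogeneity $\theta(\lambda s,\lambda t) = \lambda\theta(s,t)$ together with the explicit form of the logarithmic mean yields
\[
\theta\!\left(\frac{c}{a},\frac{c}{b}\right) \;=\; \frac{c\,(b-a)}{a\,b\,\log(b/a)}.
\]
Thus the two functionals differ only by the dimensionless factor
\[
\Phi(a,b) \;:=\; \frac{(b-a)\log(1+e^2/a)}{b\log(b/a)}
\]
on the simplex $\mathcal{S} := \{(a,b): 0<a\le b,\ a+b\le 1\}$, and both directions of the lemma reduce to estimates on $\Phi$ (combined, in one subcase, with a capacity contraction).

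For the upper bound $\alpha_{\capa}^\theta \le 3\alpha_{\capa}$ I would take any pair admissible for $\alpha_{\capa}$, so $b\ge 1/2$ and hence automatically $a\le 1/2$, and bound the $\theta$-ratio of that same pair. A direct differentiation gives
\[
\partial_b \Phi \;=\; \frac{\log(1+e^2/a)\,\bigl[a\log(b/a) - (b-a)\bigr]}{b^{2}\log^{2}(b/a)},
\]
and the bracket is nonpositive on $b\ge a$ by $\log r \le r-1$. Hence $\Phi(a,\cdot)$ is decreasing on $[a,1-a]$, so $\sup\Phi = \lim_{b\downarrow a}\Phi(a,b) = \log(1+e^2/a) \le \log(1+2e^2) < 3$ for $a \le 1/2$. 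Taking the infimum over admissible pairs yields the claim.

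For the lower bound $\alpha_{\capa}^\theta \ge \tfrac{1}{2}\alpha_{\capa}$ I take a disjoint pair $(A,B)$ with $a\le b$. If $b \ge 1/2$ the pair is already admissible and the boundary estimate $\Phi(a,b)\ge \Phi(a,1-a)\ge 1$ suffices; this last inequality is a one-variable check using the asymptotics $\log(1+e^2/a) = \log(1/a) + 2 + o(1)$ and $\log((1-a)/a) = \log(1/a) + o(1)$ as $a\to 0^+$, together with direct inspection at $a\to 1/2^-$ where the limit is $\log(1+2e^2) > 1$. The genuinely delicate case is $b<1/2$: here I would invoke the optimal potential $f_*$ realising $\capa(A,B)$ and pick a level $t^*$ on the heavier side of the $m$-median of $f_*$. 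The Lipschitz truncation $g := (f_*/t^*)\wedge 1$ is admissible for $\capa(A,\{f_*\ge t^*\})$ with $\mathcal{E}(g) \le c/t^{*\,2}$; symmetrically, $\tilde g := ((1-f_*)/(1-t^*))\wedge 1$ is admissible for $\capa(B,\{f_*\le t^*\})$ with $\mathcal{E}(\tilde g) \le c/(1-t^*)^{2}$. One of $\{f_*\ge t^*\},\{f_*\le t^*\}$ has measure $\ge 1/2$, producing an admissible pair for $\alpha_{\capa}$ whose capacity is controlled by $c$ up to a universal constant. Combining this capacity bound with the $\Phi$-calculus converts the $\theta$-ratio of $(A,B)$ into the desired lower bound.

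\textbf{Main obstacle.} The one genuinely subtle step is the subcase $b<1/2$ of the lower bound: naive replacements such as $B\mapsto V\setminus A$ can inflate the capacity arbitrarily (long-path graphs show this), so one must exploit the \emph{structure} of the optimal capacity potential through a level-set contraction argument to produce an admissible pair whose capacity remains comparable to $\capa(A,B)$. Balancing the two median-side subcases to keep the universal constant at $1/2$ requires some care, but apart from that the entire proof is routine one-variable calculus.
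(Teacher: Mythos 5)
Your overall strategy is the same as the paper's: both directions reduce to one\--variable estimates on the dimensionless ratio $\Phi(a,b)=\theta(c/a,c/b)\big/\tfrac{c}{a\log(1+e^2/a)}$, and the lower bound additionally requires truncating the optimal capacity potential at a level whose superlevel set has measure at least $\tfrac12$ (the paper takes the level $\tfrac12$ and the set $B'=\supp(h-\tfrac12)_+$, i.e.\ exactly your construction with $t^*=\tfrac12$). However, your upper\--bound argument contains a genuine error. Having correctly shown that $\Phi(a,\cdot)$ is decreasing on $[a,1-a]$, you bound $\sup_b\Phi(a,b)$ by the limit as $b\downarrow a$, namely $\log(1+e^2/a)$, and then assert that this is at most $\log(1+2e^2)<3$ for $a\le\tfrac12$. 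The inequality goes the wrong way: $a\mapsto\log(1+e^2/a)$ is \emph{decreasing}, so for $a\le\tfrac12$ it is at least $\log(1+2e^2)$ and blows up as $a\to0^+$, so the bound you extract is vacuous. The constraint $m(B)\ge\tfrac12$, which you record and then discard, is exactly what saves the statement: by your own monotonicity of $\Phi(a,\cdot)$ it gives
\[
\Phi(a,b)\;\le\;\Phi\bigl(a,\tfrac12\bigr)\;=\;\frac{(1-2a)\log(1+e^2/a)}{\log(1/(2a))},
\]
and it is this quantity, not $\log(1+e^2/a)$, that must be checked to be at most $3$ on $(0,\tfrac12]$ (it tends to $1$ as $a\to0^+$ and to $\log(1+2e^2)\approx2.76$ as $a\to\tfrac12$). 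This is precisely the calculus inequality the paper uses, so the repair is immediate but necessary.

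Your lower\--bound plan is correct and coincides with the paper's argument. Two remarks on the part you flag as delicate: in the subcase $m(B)\ge\tfrac12$ no surgery is needed, as you say; in the subcase $m(B)<\tfrac12$ the bookkeeping (which median side simultaneously carries measure $\ge\tfrac12$ and small truncation energy) is handled only loosely in the paper as well, via $\mathcal E((h-\tfrac12)_+)+\mathcal E((h-\tfrac12)_-)\le\mathcal E(h)$ and the freedom to swap the roles of $A$ and $B$; any mismatch can be absorbed because when the large set $B$ (with $m(B)\le\tfrac12$) plays the role of the small set, the relevant $\Phi$\--type ratio is bounded below by $\log(1+2e^2)>2$. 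So the only substantive defect in your proposal is the reversed inequality in the upper bound.
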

\begin{proof}
We calculate
\[
\theta \left(\frac{\capa(A,B)}{m(A)},\frac{\capa(A,B)}{m(B)} \right)= \frac{\capa(A,B)}{m(A)} \cdot \frac{1-\frac {m(A)}{m(B)}}{\log(m(B)/m(A))}.
\]
We first show the latter inequality.
Assume $A,B$ minimize $\alpha_{\capa}$. Then, $m(A) \leq \frac 1 2 \leq m(B) \leq 1$ and thus,
\[
\frac{1-\frac {m(A)}{m(B)}}{\log(m(B)/m(A))} \leq \frac{1- 2 m(A) }{-\log(2m(A))} \leq 3 \cdot \frac{1}{-\log \left(1 + \frac {e^2} {m(A)}\right)}
\]
and therefore,
\[
\alpha_{\capa}^{\theta} \leq \theta \left(\frac{\capa(A,B)}{m(A)},\frac{\capa(A,B)}{m(B)} \right) \leq 3 \cdot \frac{\capa(A,B)}{-m(A)\log \left(1 + \frac {e^2} {m(A)}\right)} = 3 \alpha_{\capa}. 
\]
We now show the former inequality.
We assume $A,B$ minimize $\alpha_{\capa}^\theta$.
Let $h$ be such that $h_A=0$ and $h_B=1$ and $\Delta h=0$ on $(A\cup B)^c$. Let $B' := \supp (h-\frac 1 2)_+$. Without obstruction, we can assume $m(B') \geq \frac 1 2$, as otherwise, we could exchange $A$ and $B$.
 Then,
 \[
\capa(A,B') \leq 4 \mathcal E((h-\frac 1 2)_+) \leq 2\capa(A,B).
 \]
Moreover,
\[
\frac{1}{-\log \left(1 + \frac {e^2} {m(A)}\right)} \leq 
\frac{1- A }{-\log(A)}  \leq  \frac{1-\frac {m(A)} {m(B)}}{\log(m(B)/m(A))}
\]
and thus,
\begin{align*}
\alpha_{\capa} \leq \frac{\capa(A,B')}{m(A)}\frac{1}{-\log \left(1 + \frac {e^2} {m(A)}\right)}  &\leq \frac{2\capa(A,B)}{m(A)}\frac{1-\frac {m(A)} {m(B)}}{\log(m(B)/m(A))} \\&= 2\theta \left(\frac{\capa(A,B)}{m(A)},\frac{\capa(A,B)}{m(B)} \right) =2 \alpha_{\capa}^\theta.
\end{align*}
This implies the first estimate and finishes the proof.
\end{proof}

\subsection{Capacity and Dirichlet eigenvalues}

Estimates of the log-Sobolev constant via the spectral profile are given in \cite{goel2006mixing,hermon2018characterization}.
In their work, they compare the Dirichlet eigenvalue with the volume.
We compare the Dirichlet eigenvalue with the Dirichlet eigenvalue of the complement.
Let $X \subset V$.
We define
\[
\lambda_X := \inf \{\mathcal E(f,f): \|f\|_2 = 1, f|_{X^c} = 0  \},
\]
and we define the log-spectral constant as
\[
\as := \inf_{X \subset V} \theta(\lambda_X,\lambda_{X^c}).
\]
where $\theta$ is the logarithmic mean. 

\begin{theorem}\label{thm:asacap}
The  log-spectral constant $\as$ satisfies
\[
\frac 1 4\alpha_{\capa}^\theta \leq \as \leq 2\alpha_{\capa}^\theta. 
\]
\end{theorem}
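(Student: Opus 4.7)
My plan is to establish each direction by converting between capacitary pairs $(A, B)$ and Dirichlet pairs $(X, X^c)$ using level sets of the capacitary potential (for the upper bound) and of the Dirichlet eigenfunction (for the lower bound).

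For the upper bound $\as \leq 2\alpha_{\capa}^{\theta}$, I would start with an arbitrary pair $A, B$ and let $h$ be the capacitary potential with $h|_A = 0$, $h|_B = 1$, harmonic elsewhere, so $\mathcal E(h) = \capa(A, B)$ and $0 \leq h \leq 1$. For $t \in [0,1]$ I set $X_t := \{h \leq t\}$ and consider the test functions $f_t := (t-h)_+$ and $g_t := (h-t)_+$, supported on $X_t$ and $X_t^c$ and equal to $t$ on $A$ and $1-t$ on $B$ respectively. The variational definition gives
\[
\lambda_{X_t} \leq \frac{\mathcal E(f_t)}{t^2 m(A)}, \qquad \lambda_{X_t^c} \leq \frac{\mathcal E(g_t)}{(1-t)^2 m(B)}.
\]
A short edge-by-edge check (splitting each edge by whether both endpoints fall on one side of $t$ or straddle it) shows $\mathcal E(f_t) + \mathcal E(g_t) \leq \mathcal E(h) = \capa(A, B)$. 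An intermediate value argument — using that $\mathcal E(f_t)/t^2 \to w(\partial A) \geq \capa$ as $t \to 0^+$ and equals $\capa$ at $t=1$, with the symmetric behavior for $\mathcal E(g_t)/(1-t)^2$ — produces $t^* \in [0,1]$ where both ratios coincide at a common value $X^*$. Since $X^*(t^{*2} + (1-t^*)^2) \leq \capa$ and $t^2 + (1-t)^2 \geq \tfrac12$, I obtain $X^* \leq 2\capa$, whence homogeneity of $\theta$ gives $\theta(\lambda_{X_{t^*}}, \lambda_{X_{t^*}^c}) \leq 2\,\theta(\capa/m(A),\capa/m(B))$, and taking infima yields $\as \leq 2\alpha_{\capa}^{\theta}$.

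For the lower bound $\tfrac14\alpha_{\capa}^{\theta} \leq \as$, I fix $X$ and work with the first Dirichlet eigenfunctions $u \geq 0$ on $X$ and $v \geq 0$ on $X^c$, normalized so $\|u\|_2 = \|v\|_2 = 1$, $\mathcal E(u) = \lambda_X$, $\mathcal E(v) = \lambda_{X^c}$. The key input is a discrete Maz'ya-type capacitary inequality
\[
\int_0^\infty 2t\,\capa(\{u \geq t\}, X^c)\,dt \leq 4\,\mathcal E(u),
\]
which I would prove by dyadic decomposition $u = \sum_k f_k$ with $f_k := (u-2^k)_+ \wedge 2^k$: the rescaled layers $f_k/2^k$ are admissible for $\capa(\{u \geq 2^{k+1}\}, X^c)$, and monotonicity of $f_k$ in $u$ forces all cross terms $\mathcal E(f_k, f_l)$ to be non-negative, so $\sum_k \mathcal E(f_k) \leq \mathcal E(u)$. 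Combined with layer cake $\|u\|_2^2 = \int 2t\,m(\{u \geq t\})\,dt$, averaging extracts a level $t_1$ for which $A := \{u \geq t_1\} \subset X$ satisfies $\capa(A, X^c) \leq 4\lambda_X m(A)$. The same argument applied to $v$ yields $B \subset X^c$ with $\capa(B, X) \leq 4\lambda_{X^c} m(B)$. Monotonicity of capacity in each argument gives $\capa(A, B) \leq \min(\capa(A, X^c), \capa(X, B))$, and homogeneity of $\theta$ closes the loop: $\alpha_{\capa}^{\theta} \leq \theta(\capa(A,B)/m(A), \capa(A,B)/m(B)) \leq 4\,\theta(\lambda_X, \lambda_{X^c}) = 4\as$.

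The hard part will be calibrating the constants. The sharp Maz'ya constant $4$ is not immediate from naive dyadic bookkeeping — the integral $\int_{2^k}^{2^{k+1}} 2t\,dt = 3 \cdot 4^k$ contributes an extra factor on top of $\sum \mathcal E(f_k) \leq \mathcal E(u)$ — so reaching $4$ requires either a refined dyadic optimization or a continuous interpolation. The factor $2$ in the upper bound is more robust, coming from the elementary inequality $t^2 + (1-t)^2 \geq \tfrac12$, but it does rely on existence of the equalizing level $t^*$, which is guaranteed by continuity of $t \mapsto \mathcal E(f_t)/t^2$ and $t \mapsto \mathcal E(g_t)/(1-t)^2$ together with the sign change of their difference forced by their boundary values.
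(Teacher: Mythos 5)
Your proposal follows the same skeleton as the paper's proof in both directions: pass from a capacitary pair $(A,B)$ to a Dirichlet pair $(X,X^c)$ by cutting the equilibrium potential $h$ at a level, and conversely extract sets $A\subset X$, $B\subset X^c$ with $\capa(A,X^c)\le 4\lambda_X m(A)$, $\capa(B,X)\le 4\lambda_{X^c}m(B)$ and finish with monotonicity of $\capa$ and homogeneity/monotonicity of the logarithmic mean. Two local differences are worth noting. For the upper bound, the paper fixes the cut at $t=1/2$ and asserts $\mathcal E\bigl((h-\tfrac12)_\pm\bigr)\le\tfrac12\mathcal E(h)$ (which for both signs simultaneously really uses harmonicity of $h$, via $\mathcal E\bigl(h,(h-\tfrac12)_-\bigr)=\tfrac12\mathcal E(h)$); your sliding level $t$ with the intermediate-value equalization and $t^2+(1-t)^2\ge\tfrac12$ reaches the same constant $2$ without invoking harmonicity, and the IVT step is sound since $\mathcal E(f_t)/t^2\to\mathcal E(1_{\{h=0\}})\ge\capa(A,B)$ as $t\to0^+$ while $\mathcal E(g_t)/(1-t)^2\to\mathcal E(h)=\capa(A,B)$. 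For the lower bound, the paper does not prove the extraction step at all but imports it from Corollary~2.10 of the cited work of Schlichting et al.; you instead try to derive it from a discrete Maz'ya capacitary inequality with constant $4$, and, as you correctly flag, the naive dyadic bookkeeping only yields a worse constant (of order $12$), so the sharp constant is the one genuinely unfinished point in your write-up. Since that sharp extraction lemma is exactly the cited result, your argument closes by citation; if you insist on self-containment you would need the refined (non-dyadic) proof of Maz'ya's inequality.
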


\begin{proof}
We first prove the latter inequality.
Let $A,B \subset V$ minimizing $\alpha_{\capa}^\theta$.
Let $h$ be the minimizer of the capacity, i.e., $\Delta h= 0$ on $(A\cup B)^c$ and $h=0$ on $A$ and $h=1$ on $B$. 
Let $X:= \{h \leq \frac 1 2\}$ and $Y=X^c$.
Let $h_X := (h - \frac 1 2)_-$ and $h_Y := (h- \frac 1 2)_+$
Then,
\[
\frac 1 2 \mathcal E(h) \geq \mathcal E(h_X) \geq \lambda_X \|h\|_2^2 \geq \frac {\lambda_X} 4 m(A)
\]
and hence,
\[
\lambda_X \leq  \frac{2 \capa(A,B)}{m(A)}
\]
and similarly,
\[
\lambda_Y \leq  \frac{2 \capa(A,B)}{m(B)}.
\]
Thus,
\begin{align*}
\as \leq \theta(\lambda_X,\lambda_{Y}) \leq 2\theta \left(\frac{\capa(A,B)}{m(A)},\frac{\capa(A,B)}{m(B)} \right) =2 \alpha_{\capa}^\theta.
\end{align*}
We finally prove the former estimate.
Let $X$ be a minimizer of $\as$ and let $y=X^c$.
By \cite[Corollary~2.10]{schlichting2019poincare}, there exists $A \subset X$ with
\[
\frac{\capa(A,Y)}{m(A)} \leq 4 \lambda_X
\]
Similarly, there exists $B \subset Y$ such that
\[
\frac{\capa(B,X)}{m(B)} \leq 4 \lambda_Y
\]
Using $\capa(A,B) \leq \capa(A,Y)$ and $\capa(A,B) \leq \capa(B,X)$, we obtain
\[
\alpha_{\capa}^\theta \leq \theta \left(\frac{\capa(A,B)}{m(A)},\frac{\capa(A,B)}{m(B)} \right) \leq 4\theta(\lambda_X,\lambda_Y) = 4\as.
\]
This proves the former inequality and finishes the proof.
\end{proof}

Combining Thereom~\ref{thm:acapalpha}, Lemma~\ref{lem:acaptheta} and Theorem~\ref{thm:asacap}, we obtain the following corollary.
\begin{corollary}\label{cor:asalpha}
There exist global constants $c,C>0$ such that
\[
c\alpha \leq \as \leq C \alpha.
\]
\end{corollary}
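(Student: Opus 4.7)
The plan is to obtain the corollary as a direct chain of the three quantitative comparisons already proved in this subsection. Since each of Theorem~\ref{thm:acapalpha}, Lemma~\ref{lem:acaptheta}, and Theorem~\ref{thm:asacap} gives two-sided bounds between a pair of constants up to universal multiplicative constants, transitivity yields the claim with explicit (albeit not sharp) constants.

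Concretely, I would first combine Lemma~\ref{lem:acaptheta} with Theorem~\ref{thm:asacap}. The lemma gives
\[
\tfrac{1}{2}\alpha_{\capa} \leq \alpha_{\capa}^\theta \leq 3\alpha_{\capa},
\]
while the theorem gives
\[
\tfrac{1}{4}\alpha_{\capa}^\theta \leq \as \leq 2\alpha_{\capa}^\theta.
\]
Chaining these two produces universal constants $c_1, C_1 > 0$ with
\[
c_1 \alpha_{\capa} \leq \as \leq C_1 \alpha_{\capa}.
\]

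Next, I would feed this into Theorem~\ref{thm:acapalpha}, which provides universal constants $c_0, C_0 > 0$ such that $c_0 \alpha_{\capa} \leq \alpha \leq C_0 \alpha_{\capa}$. Inverting these relations and substituting yields universal constants $c, C > 0$ with $c\alpha \leq \as \leq C\alpha$, which is the statement of the corollary.

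There is no genuine obstacle here; the work lies entirely in the three earlier results, and the only points to keep track of are (i) making sure the direction of each inequality is oriented correctly during substitution, and (ii) absorbing the multiplicative factors from Lemma~\ref{lem:acaptheta}, Theorem~\ref{thm:asacap}, and Theorem~\ref{thm:acapalpha} into the single pair $c, C$. Since all constants involved are universal (independent of the Markov chain), the resulting $c, C$ are likewise universal, so no restriction on $G = (V, P, d)$ is needed.
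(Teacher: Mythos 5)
Your proposal is correct and is exactly the paper's argument: the corollary is stated as an immediate consequence of combining Theorem~\ref{thm:acapalpha}, Lemma~\ref{lem:acaptheta}, and Theorem~\ref{thm:asacap} by transitivity, with the universal constants absorbed into $c$ and $C$. Nothing further is needed.
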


\subsection{Dirichlet eigenvalue and positive super-solutions}

We recall a general lower bound on the Dirichlet eigenvalue in terms of positive super solutions.
This kind of result is known as Allegretto Piepenbrink type result, \cite{haeseler2011generalized}. We first set the stage. 
Let $W \subsetneq V$, let
\[
\Delta_W f := 1_W \Delta (1_W \cdot f),
\]
and let $\lambda_W$ be the smallest positive eigenvalue of $-\Delta_W$.
The following Lemma can be found in \cite[Theorem~3.1]{haeseler2011generalized} in a way more general framework. For convenience, we give a simple proof for our framework.
\begin{lemma}\label{lem:DirichletSupersolution}
Let $\lambda>0$. Suppose there is a non-negative function $f \in \R^V$ with $f|_W \neq 0$  and
\[
\Delta f \leq -\lambda f \mbox{ on } W.
\]
Then, $\lambda_W \geq \lambda$.
\end{lemma}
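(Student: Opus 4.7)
The plan is to apply the classical Allegretto--Piepenbrink integration-by-parts argument: test the supersolution $f$ against a non-negative first Dirichlet eigenfunction of $-\Delta_W$.

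First I would produce $\phi\in\R^V$ with $\phi\geq 0$, $\phi|_{W^c}=0$, and $-\Delta\phi=\lambda_W\phi$ on $W$. Perron--Frobenius applied to the symmetric sub-Markovian semigroup $e^{t\Delta_W}$ yields such a $\phi$, strictly positive on each connected component of $W$. If $W$ is disconnected, I would restrict attention to the connected component on which $f|_W$ is not identically zero and take $\phi$ to be the corresponding first Dirichlet eigenfunction extended by zero on the rest; this arranges $\phi>0$ on $\supp(f|_W)$.

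Next I would compute $\sum_x\phi(x)\Delta f(x)\,m(x)$ in two ways. Because $\phi$ is non-negative and supported on $W$, and $\Delta f\leq -\lambda f$ on $W$ with $f\geq 0$, the sum is at most $-\lambda\sum_{x\in W}\phi(x)f(x)m(x)$. By self-adjointness of $\Delta$ with respect to $\langle\cdot,\cdot\rangle$, the same sum equals $\sum_x f(x)\Delta\phi(x)\,m(x)$. Splitting this latter expression over $W$ and $W^c$: on $W$ one uses $\Delta\phi=-\lambda_W\phi$, while on $W^c$ the vanishing $\phi=0$ gives $\Delta\phi(x)=\sum_y P(x,y)\phi(y)\geq 0$, which combined with $f\geq 0$ contributes non-negatively. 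Therefore
\[
-\lambda_W \sum_{x\in W}\phi(x)f(x)m(x) \;\leq\; \sum_x \phi(x)\Delta f(x)\,m(x) \;\leq\; -\lambda \sum_{x\in W}\phi(x)f(x)m(x),
\]
which rearranges to $(\lambda-\lambda_W)\sum_{x\in W}\phi(x)f(x)m(x)\leq 0$.

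The main (and essentially only) remaining subtlety is strict positivity of the weighted inner product $\sum_{x\in W}\phi(x)f(x)m(x)$. With $\phi>0$ on a component meeting $\supp(f|_W)$---secured by the Perron--Frobenius step above---and $f\geq 0$ with $f|_W\not\equiv 0$, this sum is strictly positive, and dividing through yields $\lambda\leq\lambda_W$ as desired.
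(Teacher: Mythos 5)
Your argument is correct, and it takes a genuinely different route from the paper's. The paper argues parabolically: after reducing to $f$ supported on $W$ and observing $f>0$ on $W$ via a maximum principle, it compares the supersolution flow $e^{-\lambda t}f$ with the exact eigenfunction flow $e^{-\lambda_W t}g$ for an eigenfunction $g\le f$, and the parabolic maximum principle keeps the difference non-negative, forcing $\lambda_W\ge\lambda$. You instead use the stationary Allegretto--Piepenbrink pairing $\langle\phi,\Delta f\rangle=\langle\Delta\phi,f\rangle$ against the non-negative ground state $\phi$ of $-\Delta_W$. This is arguably cleaner: it needs no semigroup, no domination of an eigenfunction by $f$, and (when $W$ is connected) not even the preliminary positivity of $f$ on $W$, since strict positivity of $\phi$ on $W$ together with $f|_W\neq 0$ already makes the pairing $\sum_{x\in W}\phi f\,m$ strictly positive. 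The one delicate point is exactly the one you flag yourself: if $W$ is disconnected and $f$ vanishes identically on the closed neighbourhood of some component, your component-restricted $\phi$ only yields $\lambda\le\lambda_{W_1}$ for the component $W_1$ meeting $\supp(f|_W)$, which need not control $\lambda_W=\min_j\lambda_{W_j}$. But this is not a defect of your proof relative to the paper's: in that regime the statement as literally written fails (take $V$ a path, $W$ the two endpoints, and $f$ the indicator of one endpoint; the hypothesis holds with $\lambda$ the transition rate at that endpoint, yet $\lambda_W$ is the minimum over both endpoints), and the paper's own assertion that ``$f>0$ on $W$ by the maximum principle'' likewise propagates only within a connected component of $W$. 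In all applications in the paper the strict inequality $\Delta f<0$ on $W$ forces $f>0$ on every component of $W$, so both arguments apply componentwise and the lemma is used correctly there.
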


\begin{proof}
Without obstruction, we assume $f=0$ on $V\setminus W$.
We notice that $f>0$ on $W$ by the maximum principle.
Let $u_t := e^{-\lambda t} f$.
Then,
$\partial_t u_t \geq \Delta_W u_t$.
Let $g$ be an eigenfunction with respect to $\lambda_W$ such that $g \leq f$. Let $v_t := e^{-\lambda_W t} g$.
Then, $\partial_t v_t  = \Delta_W v_t$.
Hence,
\[
\partial_t (u_t - v_t) \geq \Delta(u_t - v_t)
\]
and $u_0 - v_0 \geq 0$. This implies
$ u_t - v_t \geq 0 $
by the maximum principle and thus, $\lambda_W \geq \lambda$, finishing the proof.
\end{proof}

\section{Positive curvature}\label{sec:PosCurv}

In this section, we investigate the relation between positive Ollivier curvature and modified log-Sobolev inequalities.
Specifically, we prove that for Markov chains,
\[
\Ric \geq K  \mbox{ \&  } \sec \geq 0 \quad \Longrightarrow \quad \amod \geq K
\]
where $\amod$ is the modified log-Sobolev constant, $\Ric$ is the Ollivier Ricci curvature, and $\sec$ is the Ollivier sectional curvature based on the $\ell_\infty$ Wasserstein distance discussed in the next subsection.
This answers an open question by Pedrotti (see the discussion after \cite[Conjecture~5.25]{pedrotti2023contractive}).
 The proof is based on a new characterization of non-negative sectional curvature via various non-linear gradient estimates, see Corollary~\ref{cor:secCharGradientEstimates}.

In Subsection~\ref{sec:CounterExample}, we prove that non-negative sectional curvature is necessary for obtaining the modified log-Sobolev inequality. More precisely, we give an example of a Markov chain with
\begin{align*}
\Ric \geq 1 \quad \mbox{ and } \quad \amod \leq \eps
\end{align*}
for arbitrarily small $\eps>0$.
This example serves as counterexample for a conjecture by Peres and Tetali.

\subsection{Ollivier sectional curvature}
In Yann Ollivier's survey of Ricci curvature for metric spaces and Markov chains \cite[Problem~P]{ollivier2010survey}, he proposes a notion of discrete sectional curvature by replacing the $\ell_1$ by the $\ell_\infty$ Wasserstein metric in the formula for the Ollivier curvature. Specifically for having non-negative sectional curvature, there must exist a coupling between $P(x,\cdot)$ and $P(y,\cdot)$ moving all points by a distance of at most $d(x,y)$. We now give the precise definition.

\begin{definition}
We say a lazy Markov  chain $(V,P)$ has non-negative Ollivier sectional curvature at edge $x\sim y$ if
there exists a transport plan $\pi:V\times V \to [0,\infty)$ transporting the measure $P(x,\cdot)$ to $P(y,\cdot)$ such that
\[
d(x',y') \leq 1
\] 
whenever $\pi(x',y')>0$.
\end{definition}

Equivalently, one can introduce a sectional curvature lower bound using the $\ell_\infty$ Wasserstein distance, i.e.,
\[
\kappa_\infty(x,y) := 1 - \frac{W_\infty(P(x,\cdot),P(y,\cdot))}{d(x,y)}
\]
where
\[
W_\infty(\mu,\nu) = \inf_{\pi} \sup_{(x',y')\in \supp(\pi)}d(x',y')
\]
where the infimum is taken over all transport plans $\pi$.
It is easy to check that this definition precisely coincides with non-negative curvature in
\cite[Definition~5.3]{pedrotti2023contractive}
where the curvatures with respect to different $p$-Wasserstein metrics have been compared.

Moreover, the sectional curvature introduced above is closely related to various non-linear Ollivier type curvature notions introduced in \cite{kempton2019large}.
Particularly, we show that non-negative sectional curvature is equivalent to all statements in
\cite[Theorem~1.12(iii)]{kempton2019large}.
\begin{corollary}\label{cor:secCharGradientEstimates}
Let $(V,P)$ be a lazy reversible Markov chain equipped with the combinatorial distance.
The following statements are equivalent:
\begin{enumerate}[(i)]
\item $G$ has non-negative sectional curvature 
\item $\|\nabla \log P_t f\|_\infty \leq \|\nabla \log f\|_\infty$ for all positive $f\in \R^V$.
\item $|\nabla P_t f| \leq P_t |\nabla f|$ for all $f \in \R^V$
\item $|\nabla \sqrt{P_t f}| \leq P_t |\nabla \sqrt f|$ for all positive $f\in \R^V$,
\end{enumerate}
where $|\nabla f|(x):= \max_{y\sim x} |f(y)-f(x)|$ is the pointwise gradient and $P_t = e^{\Delta t}$ is the heat semigroup.
\end{corollary}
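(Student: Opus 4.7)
The plan is to invoke \cite[Theorem~1.12(iii)]{kempton2019large}, which provides a unified characterization of various non-linear Ollivier-type curvature conditions in terms of semi-group gradient estimates of the form appearing in (ii)--(iv). Our task reduces to identifying non-negative Ollivier sectional curvature (i) with the hypothesis of that theorem and then translating the conclusions.

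The first step is to upgrade (i) from an edgewise statement to a semi-group statement: $W_\infty(P_t(x,\cdot), P_t(y,\cdot)) \leq d(x,y)$ for all $x,y \in V$ and all $t \geq 0$. For adjacent $x \sim y$, (i) provides a one-step coupling $\pi$ with $\ell_\infty$ displacement at most $1 = d(x,y)$. Composing such couplings along geodesic paths yields couplings of $P(x,\cdot)$ and $P(y,\cdot)$ for arbitrary $x,y$ with $\ell_\infty$ displacement at most $d(x,y)$. Iterating gives couplings of $P^n(x,\cdot)$ and $P^n(y,\cdot)$, and the series expansion $P_t = e^{-t}\sum_{n\geq 0}(t^n/n!)P^n$, valid whenever $\sum_y P(x,y) = 1$, assembles these into a semi-group coupling achieving the $W_\infty$-bound.

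The implications (i) $\Rightarrow$ (ii), (iii), (iv) then follow by direct computations at the level of the coupling. Fix $x\sim y$ and a coupling $\pi_t$ of $P_t(x,\cdot)$ and $P_t(y,\cdot)$ supported on pairs $(x',y')$ with $d(x',y')\leq 1$, so that $|f(y')-f(x')| \leq |\nabla f|(x')$ and $f(y')/f(x') \leq \exp(\|\nabla\log f\|_\infty)$ on the support. For (iii), $|P_tf(y)-P_tf(x)| \leq \sum \pi_t(x',y')|f(y')-f(x')| \leq P_t|\nabla f|(x)$ after marginalizing in $y'$. For (ii), the pointwise ratio bound transfers directly to the weighted average $P_tf(y)/P_tf(x)$. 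For (iv), Cauchy--Schwarz applied to the coupling yields $(\sqrt{P_tf(y)}-\sqrt{P_tf(x)})^2 \leq \sum \pi_t(\sqrt{f(y')}-\sqrt{f(x')})^2$, which combined with the pointwise $\sqrt{f}$-gradient bound and the framework of \cite{kempton2019large} produces the desired estimate.

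The main obstacle is the reverse direction (ii), (iii), or (iv) $\Rightarrow$ (i): extracting an explicit $\ell_\infty$-Wasserstein coupling from a functional inequality requires a Kantorovich-type duality argument, which is the technically harder half of KMP's theorem. Rather than reconstructing the matching explicitly, we invoke \cite[Theorem~1.12(iii)]{kempton2019large} as a black box to close the cycle of equivalences, thereby obtaining the full characterization.
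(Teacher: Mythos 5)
There is a genuine gap, and it sits exactly where the paper's actual work lies. The corollary is not a direct consequence of \cite[Theorem~1.12(iii)]{kempton2019large}: that theorem relates the gradient estimates (ii)--(iv) to certain \emph{infinitesimal} conditions on the Laplacian (exponential-gradient comparisons along edges), not to the existence of an $\ell_\infty$-bounded coupling of $P(x,\cdot)$ and $P(y,\cdot)$. The bridge between the coupling definition (i) and those infinitesimal conditions is precisely Theorem~\ref{thm:secCurvatureExp}, whose nontrivial half is the duality argument: one sets up the transport problem with the $0/\infty$ cost $c(x',y')=\infty\cdot 1_{\{d(x',y')\ge 2\}}$, passes to the Kantorovich dual, and from a hypothetical duality gap extracts, via a level-set argument, an explicit $1$-Lipschitz function $h=1_{\{g>r\}}-1_{\{f\le r\}}$ that violates the exponential-gradient comparison for large $\lambda$. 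You explicitly decline to carry out any such reconstruction and instead ``invoke KMP as a black box to close the cycle,'' but the cycle cannot close without this step: nothing you cite takes you from (ii), (iii) or (iv) back to the existence of the coupling in (i). This is the missing idea, and it is the main content of the paper's proof.

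Two further points on your forward direction. First, the semigroup upgrade via $P_t=e^{-t}\sum_n (t^n/n!)P^n$ and composition of one-step couplings is fine in spirit for lazy chains, but it is not needed: KMP's Theorems~1.6--1.8 already convert the infinitesimal condition into the semigroup estimates, which is how the paper proceeds. Second, your derivation of (iv) from the coupling does not work as stated: the reverse triangle inequality (or Cauchy--Schwarz) on the coupling gives
\[
\bigl|\sqrt{P_tf(y)}-\sqrt{P_tf(x)}\bigr|\;\le\;\sqrt{P_t\,|\nabla\sqrt f|^2(x)},
\]
and since $\sqrt{P_t g^2}\ge P_t g$, this is \emph{weaker} than the claimed $|\nabla\sqrt{P_tf}|\le P_t|\nabla\sqrt f|$; the inequality points the wrong way. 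Estimate (iv) genuinely requires the finer argument of \cite[Theorem~1.8]{kempton2019large} and cannot be obtained by a one-line convexity application to the coupling.
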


The corollary is an easy consequence of the following theorem and \cite[Theorem~1.12(iii)]{kempton2019large} combined with \cite[Theorems~1.6,1.7,1.8]{kempton2019large}.

\begin{theorem}\label{thm:secCurvatureExp}
Let $(V,P)$ be a lazy reversible Markov chain equipped with the combinatorial distance.
Let $x\sim y \in V$. 
The following statements are equivalent.
\begin{enumerate}[(i)]
\item The edge $(x,y)$ has non-negative Ollivier sectional curvature, 
\item For all functions $f \in \Lip(1)$ with $f(y)-f(x)=1$, and all $\lambda \geq 0$,
\[
\frac{\Delta e^{\lambda f}(x)}{e^{\lambda f}(x)} \geq \frac{\Delta e^{\lambda f}(y)}{e^{\lambda f}(y)} \qquad \mbox{ and } \qquad \frac{\Delta e^{-\lambda f}(x)}{e^{-\lambda f}(x)} \leq \frac{\Delta e^{-\lambda f}(y)}{e^{-\lambda f}(y)}.
\]
\end{enumerate}
\end{theorem}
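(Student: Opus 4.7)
The forward direction (i) $\Rightarrow$ (ii) follows from a direct coupling argument. Let $\pi$ be a transport plan from $P(x,\cdot)$ to $P(y,\cdot)$ with $d(x',y') \leq 1 = d(x,y)$ on $\supp\pi$, furnished by non-negative sectional curvature. For any $f \in \Lip(1)$ with $f(y)-f(x) = 1$ and any $(x',y') \in \supp\pi$, the 1-Lipschitz inequality $f(y')-f(x') \leq d(x',y') \leq 1 = f(y)-f(x)$ rearranges to the pointwise comparison $f(x')-f(x) \geq f(y')-f(y)$. Applying the monotone map $t \mapsto e^{\lambda t}$ for $\lambda \geq 0$ (respectively, the anti-monotone $t \mapsto e^{-\lambda t}$), integrating against $\pi$, and using the marginal identities yield
\[
\sum_{x'} P(x,x')\, e^{\lambda(f(x')-f(x))} \geq \sum_{y'} P(y,y')\, e^{\lambda(f(y')-f(y))}
\]
together with the reverse inequality when $\lambda$ is replaced by $-\lambda$. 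Subtracting the total mass $\sum_z P(x,z) = \sum_z P(y,z) = 1$ from both sides gives the two inequalities of (ii).

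For the reverse direction (ii) $\Rightarrow$ (i), I would argue by contraposition, invoking the Strassen-type characterization of $W_\infty$: non-negative sectional curvature at $(x,y)$ is equivalent to the Hall-type condition $P(y, B) \leq P(x, N_1(B))$ for every $B \subseteq V$, where $N_1(B) := \{z : d(z, B) \leq 1\}$. Suppose this fails for some $B$. After reducing via laziness to the case $B \subseteq \supp P(y, \cdot) \setminus N_1(x)$, I build the 1-Lipschitz function $f$ as the minimal Lipschitz extension of the prescribed values $f(x)=0$, $f(y)=1$, $f|_B = 2$, which is consistent by $d(B, x) = 2$ and $d(B, y) = 1$. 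A case analysis on combinatorial distances applied to the explicit formula $f(z) = \max\bigl(-d(x,z),\, 1 - d(y,z),\, 2 - d(B, z)\bigr)$ shows
\[
\{a \in \supp P(x,\cdot) : f(a) - f(x) = 1\} = N_1(B) \cap \supp P(x,\cdot), \qquad \{b \in \supp P(y, \cdot) : f(b) - f(y) = 1\} = B.
\]
Since both maxima $\max_{\supp P(x,\cdot)}(f - f(x))$ and $\max_{\supp P(y,\cdot)}(f - f(y))$ equal $1$, the leading-order asymptotics of the first inequality of (ii) as $\lambda \to \infty$ collapse to the coefficient comparison $P(x, N_1(B)) \geq P(y, B)$, contradicting the failure of Hall's condition. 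The second inequality of (ii) dispatches the dual Strassen direction symmetrically via the maximal Lipschitz extension of $f(x)=0$, $f(y)=1$, $f|_A = -1$ on $A \subseteq \supp P(x, \cdot) \setminus N_1(y)$.

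The principal obstacle lies in the reverse direction, specifically in the case analysis verifying the level-set structure of the Lipschitz extension, which requires tracking which of the three prescribed constraints in the formula for $f$ is binding at each combinatorial position. A secondary technical point is the reduction of general Hall-type subsets $B \subseteq V$ to the separated case $B \subseteq V \setminus N_1(x)$: one decomposes $B = (B \setminus N_1(x)) \cup (B \cap N_1(x))$, applies the separated Hall condition established above to the first piece, and exploits the inclusion $\{x\} \subseteq N_1(B \cap N_1(x))$ together with the laziness bound $P(x,x) \geq \tfrac{1}{2}$ (and its analogue on the $y$-side via the second inequality of (ii)) to close the remaining subcases.
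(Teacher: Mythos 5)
Your proposal is correct. The forward direction is identical to the paper's: couple $P(x,\cdot)$ and $P(y,\cdot)$ by a plan supported on $\{d\le 1\}$, use $f\in\Lip(1)$ and $f(y)-f(x)=1$ to get the pointwise comparison $f(x')-f(x)\ge f(y')-f(y)$ on the support, and integrate the monotone map $t\mapsto e^{\lambda t}$. The reverse direction is where you genuinely diverge. The paper sets up the $0/\infty$-cost Kantorovich problem, takes its LP dual (potentials $f,g$ with $g(y')\le f(x')$ whenever $d(x',y')\le 1$), and extracts via a level-set argument a pair of sets with $P1_{\{g>r\}}(y)>P1_{\{f>r\}}(x)$, which it tests against (ii) with the three-valued function $h=1_{\{g>r\}}-1_{\{f\le r\}}$ and $\lambda\to\infty$. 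You instead quote the Strassen/Hall marriage characterization $P(y,B)\le P(x,N_1(B))$ directly and test (ii) with the explicit minimal Lipschitz extension $f(z)=\max(-d(x,z),\,1-d(y,z),\,2-d(B,z))$; your level-set computation and the $\lambda\to\infty$ asymptotics are correct (I checked that $\{f=1\}\cap\supp P(x,\cdot)=N_1(B)\cap\supp P(x,\cdot)$, noting $y$ itself lies in $N_1(B)$, and $\{f=2\}\cap\supp P(y,\cdot)=B$). The two routes are morally the same -- both are dualizations tested against a three-valued Lipschitz function -- but yours has the advantage of a fully explicit test function and makes transparent why \emph{both} inequalities of (ii) are needed (one per Strassen direction), which the paper's argument leaves implicit in its "interchange $x$ and $y$" step. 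The one thin spot is your reduction from general $B$ to $B\subseteq\supp P(y,\cdot)\setminus N_1(x)$: the naive additive decomposition $B=B_1\cup B_2$ does not close on its own, because when $y\in B_2$ one may have $\nu(B_2)>\tfrac12$ and $N_1(B_2)$ may contribute nothing new to $N_1(B_1)\cup\{x\}$. The clean way to finish, using exactly the ingredients you name, is: if $y\notin B$ then $P(y,B)\le 1-P(y,y)\le\tfrac12\le P(x,x)\le P(x,N_1(B))$ by laziness; if $y\in B$ then pass to the complement $A:=N_1(B)^c$, observe $A\cap N_1(y)=\emptyset$ and $N_1(A)\subseteq B^c$, and apply your $y$-side (second-inequality) Hall condition to $A$. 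With that spelled out, the argument is complete.
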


\begin{proof}
We first prove $(i) \Rightarrow (ii)$.
Let $\pi$ be a transport plan with transport distance at most one.
We notice that whenever $\pi(x',y')>0$, we have
\[
f(x')-f(x) \geq f(y')-f(y)
\]
as $f \in \Lip(1)$ and as $f(y)-f(x)=1$.
We calculate
\begin{align*}
&\frac{\Delta e^{\lambda f}(x)}{e^{\lambda f}(x)} - \frac{\Delta e^{\lambda f}(y)}{e^{\lambda f}(y)}
\\=&\sum_{x',y'} \pi(x',y') \left(\exp(\lambda(f(x')-f(x))) - \exp(\lambda(f(y')-f(y))) \right)
\geq 0
\end{align*}
as all terms are non-negative. The other inequality follows similarly.

We now prove $(ii) \Rightarrow (i)$.
We consider the optimal transport problem 
\[
\min_\pi \sum_{x',y'} c(x',y')\pi(x',y')
\]
over all transport plans $\pi$ from $P(x,\cdot)$ to $P(y,\cdot)$, where $c(x',y')=0$ if $d(x',y')\leq 1$ and $c(x',y')=\infty$ otherwise.
The dual problem is
\[
\max_{f,g} Pg(y) - Pf(x)
\]
where the maximum is taken over all $f,g \in \R^V$ such that $g(y') \leq f(x')$ whenever $d(x',y') \leq 1$.
We aim to show that the maximum is zero. Suppose not.
We assume without loss of generality that $f(x)=0$.
As $P(z,z) \geq \frac 1 2$, we can also assume $g(y)=0$, as otherwise, we could replace $f$ by its positive part and enforce $g(y)$ to be zero.
Also, without loss of generality, we assume
\[
Pg_+(y) > Pf_+(x)
\]
as otherwise, we could interchange $x$ and $y$.
By a level set argument, there exists $r>0$ such that 
\[
P1_{\{g>r\}}(y) > P1_{\{f>r\}}(x).
\]
As $g \leq f(y)$ on $B_1(y)$, we see $y \in \{f>r\}$.

We now construct a 1-Lipschitz function $h$ as 
\[
h:=1_{\{g>r\}} - 1_{\{f \leq r\}}.
\]
Indeed $h \in \Lip(1)$ as $f(x')<g(y')$ implies $d(x',y')\geq 2$.
We calculate
\[
\frac{\Delta e^{\lambda h}}{e^{\lambda h}}(x) = (e^\lambda - 1) P1_{\{f>r\}} (x)
\]
and
\[
\frac{\Delta e^{\lambda h}}{e^{\lambda h}}(y) = (e^\lambda - 1)  P1_{\{g>r\}}(y) + (e^{-\lambda} - 1) P1_{\{f \leq r\}}(y).
\]
As $P1_{\{g>r\}}(y) > P1_{\{f>r\}}(x)$, we see that for $\lambda$ large, we have
\[
\frac{\Delta e^{\lambda h}}{e^{\lambda h}}(y) > \frac{\Delta e^{\lambda h}}{e^{\lambda h}}(x)
\] 
contradicting $(ii)$. This proves $(ii) \Rightarrow (i)$ by contradiction.
\end{proof}

\subsection{Ollivier curvature and modified log-Sobolev inequality}

We show that the modified log-Sobolev constant can be lower bounded by the Ollivier Ricci curvature, assuming non-negative sectional curvature. 
This answers an open question by Pedrotti affirmatively, see  \cite[Conjecture~5.25]{pedrotti2023contractive} and the discussion thereafter.
Indeed, non-negative sectional curvature is crucial as will be shown in an example on a three vertex birth death chain.

\begin{theorem}\label{thm:modLogSobPosCurv}
Let $(V,P)$ be a Markov chain equipped with the combinatorial distance. Assume that $(V,P)$ has non-negative Ollivier sectional curvature. Then,
\[
\amod \geq \inf _{x\sim y} \kappa(x,y).
\]
\end{theorem}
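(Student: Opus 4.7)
The plan is to establish exponential entropy decay
\[
\Ent(P_t u) \leq e^{-Kt}\,\Ent(u), \qquad K := \inf_{x \sim y} \kappa(x,y),
\]
for every nonnegative $u$ with $\int u\,dm = 1$; differentiating at $t = 0$ and using $\frac{d}{dt}\Ent(P_t u)\big|_{t=0} = -\mathcal{E}(u, \log u)$ then gives $\amod \geq K$ directly. Since $\Ent(P_\infty u) = 0$, entropy admits the integral representation $\Ent(u) = \int_0^\infty I(P_s u)\,ds$ with $I(v) := \mathcal{E}(v, \log v)$, so the problem reduces to proving the Fisher-information contraction
\[
I(P_s u) \leq e^{-Ks}\,I(u),
\]
from which integration yields $\Ent(u) \leq I(u)/K$.

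The strategy for the contraction is edge-wise. For a fixed edge $x \sim y$, non-negative Ollivier sectional curvature provides a coupling $\pi$ of $P(x,\cdot)$ and $P(y,\cdot)$ supported on pairs $(x',y')$ with $d(x',y') \leq d(x,y)$, while the Ollivier Ricci bound $\kappa(x,y) \geq K$ yields a coupling of mean transport distance at most $(1 - K d(x,y))$. If one can combine these into a single coupling simultaneously satisfying both bounds, then joint convexity of $(a,b) \mapsto (a-b)(\log a - \log b)$ on $(0,\infty)^2$ lets one transport the edge contribution to $I$ at time $h$ against the average of edge contributions at time $0$; summing against the reversible weights $w(x,y)$ then produces the differential inequality $\frac{d}{dt} I(P_t u) \leq -K\,I(P_t u)$.

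The main obstacle is producing a single coupling with both properties, since the $\ell_1$-optimal (Ricci) and $\ell_\infty$-optimal (sectional) couplings on a given edge are generally different. A cleaner route is to bypass the explicit coupling and work instead with the exponential characterization of non-negative sectional curvature in Theorem~\ref{thm:secCurvatureExp}(ii): for every $f \in \Lip(1)$ with $f(y) - f(x) = 1$ and every $\lambda \geq 0$, the normalized Laplacian $\Delta e^{\lambda f}/e^{\lambda f}$ is no smaller at $x$ than at $y$. Combining this qualitative inequality with the infinitesimal Ricci input $\Delta f(x) - \Delta f(y) \geq K$ (which is exactly $\kappa(x,y) \geq K$ applied at $\lambda \to 0$) via differentiation in $\lambda$ should upgrade it to the quantitative estimate $\Delta e^{\lambda f}/e^{\lambda f}(x) - \Delta e^{\lambda f}/e^{\lambda f}(y) \geq \lambda K$ for all $\lambda \geq 0$. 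Feeding $\lambda f = \log P_t u$, suitably renormalized along a maximal edge, into the time-derivative computation of $I(P_t u)$ should then drive the Fisher decay and close the argument without ever constructing the joint coupling by hand.
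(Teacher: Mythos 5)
There is a genuine gap: neither of your two routes is carried to completion, and the key quantitative step you propose is unjustified and in fact false in general. Concretely, from $h(\lambda):=\frac{\Delta e^{\lambda f}}{e^{\lambda f}}(x)-\frac{\Delta e^{\lambda f}}{e^{\lambda f}}(y)\geq 0$ (sectional curvature) together with $h(0)=0$ and $h'(0)=\Delta f(x)-\Delta f(y)\geq K$ (Ricci) you cannot conclude $h(\lambda)\geq \lambda K$: writing $h(\lambda)=\sum_{x',y'}\pi(x',y')\bigl(e^{\lambda(f(x')-f(x))}-e^{\lambda(f(y')-f(y))}\bigr)$, a transported pair with $f(x')-f(x)=0$ and $f(y')-f(y)=-1$ contributes $1-e^{-\lambda}<\lambda$, so the termwise bound fails and nothing forces the aggregate bound. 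Your first route (Fisher-information contraction $I(P_su)\leq e^{-Ks}I(u)$) is strictly stronger than the modified log-Sobolev inequality and is exactly the kind of semigroup statement that the counterexample in Section~\ref{sec:CounterExample} shows cannot follow from the Ricci bound alone; you acknowledge the coupling obstacle but never resolve it, and even granting your $\lambda$-quantitative estimate at a single maximal edge, the time derivative of $I(P_tu)$ is a sum over all edges, so controlling one edge does not close the differential inequality.

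The paper's proof sidesteps all of this by working with the \emph{minimizer} of the modified log-Sobolev quotient rather than with the semigroup. Since $\lambda\geq K$, one may assume $\amod<2\lambda$, so by \cite[Theorem~6.5]{bobkov2006modified} there is a non-constant minimizer $f$ satisfying the Euler--Lagrange equation
\[
\frac{\Delta f}{f}+\Delta(\log f)=-\amod\log f .
\]
Evaluating the difference of this identity across an edge $x\sim y$ where $g=\log f$ attains its maximal increment $\|\nabla g\|_\infty$, the sectional curvature (via Theorem~\ref{thm:secCurvatureExp}, applied to $g/\|\nabla g\|_\infty$) gives the purely qualitative sign $\frac{\Delta e^{g}}{e^{g}}(y)\leq\frac{\Delta e^{g}}{e^{g}}(x)$ for the first term, while the Ricci bound gives the quantitative estimate $\Delta g(y)-\Delta g(x)\leq -K\|\nabla g\|_\infty$ for the second; combining yields $-\amod\|\nabla g\|_\infty\leq -K\|\nabla g\|_\infty$ and hence $\amod\geq K$. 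The missing idea in your proposal is precisely this variational reduction to a single extremal edge, which lets the two curvature hypotheses act additively on the two separate terms of the Euler--Lagrange equation and removes any need for a joint coupling or a $\lambda$-quantitative upgrade.
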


\begin{proof}
Let $K:=\inf _{x\sim y} \kappa(x,y)$.
As $\lambda \geq K$, see e.g. \cite{ollivier2009ricci}, we can assume $\amod < 2\lambda$ without obstruction.
Hence, there exists a non-constant function $f$ satisfying $\|f\|_1=1$ and
\[
\frac {\Delta f} f + \Delta (\log f) = -\alpha \log f,
\]
see \cite[Theorem~6.5]{bobkov2006modified}.
Let $g= \log f$.
Let $x\sim y$ such that
\[
g(y) - g(x) = \|\nabla g \|_\infty.
\]
By non-negative Ollivier sectional curvature and Theorem~\ref{thm:secCurvatureExp}, we have
\[
\frac{\Delta e^g}{e^g}(y) \leq \frac{\Delta e^g}{e^g}(x).
\]
Moreover,
\[
\Delta g(y) - \Delta g(x) \leq - \kappa(x,y)\|\nabla g\|_\infty \leq -K\|\nabla g\|_\infty.
\]
Hence,
\begin{align*}
-\alpha \|\nabla g\|_\infty = 
\left(\frac {\Delta f} f + \Delta (\log f) \right)(y) - \left(\frac {\Delta f} f + \Delta (\log f) \right)(x)
\leq -K \|\nabla g\|_\infty
\end{align*}
implying $\alpha \geq K$ and finishing the proof.
\end{proof}

We remark that in case of birth death chains, Theorem~\ref{thm:modLogSobPosCurv} precisely recovers
\cite[Theorem~3.1]{caputo2009convex} as monotonicity of the jump rates is equivalent to non-negative sectional curvature.

\subsection{A counterexample}\label{sec:CounterExample}

In this section, we disprove the Tetali-Peres conjecture, namely that the log-Sobolev constant can be lower bounded by constant times Ollivier curvature.
In order to construct a counterexample, we must ensure that the graph has negative sectional curvature but positive Ricci curvature.
For $\eps>0$, we define a metric Markov chain $G_\eps=(\{1,2,3\},P,d)$ with combinatorial distance $d$
via
\begin{align*}
w(1,2)&=10,&
w(2,3)&=1,&
w(1,3)&=0,&\\
m(1)&=1/\eps,&
m(2)&=1,&
m(3)&=1/20.
\end{align*}
and $P(x,y) := w(x,y)/m(x)$.
It is easy to check that the Ollivier curvature is at least $1$ on the two edges.
Moreover, $G_\eps$ satisfies the Bakry Emery curvature condition $CD(1,0)$ as can be computed via \cite[Proposition~2.1]{hua2023ricci}.

We now estimate the modified log-Sobolev constant.
We choose
$f(1)=\eps$ and $f(2)=1$ and $f(3)=-\log(\eps)$.
Then, by a straight forward calculation, there exist constants $c,C$ such that for all sufficiently small $\eps>0$,
\begin{align*}
\Ent (f) \geq c\left(\log \frac 1 \eps\right)^2
\end{align*}
and
\[
\mathcal{E}(f,\log f) \leq C \log\frac 1 \eps \log \log \frac 1 \eps.
\]
Hence,
$\frac{\mathcal E(f,\log f)}{\Ent (f)}$ goes to zero as $\eps$ goes to zero.
This proves that there is no constant $C$ such that the modified log-Sobolev constant is at least $CK$ where $K$ is a lower bound on the Ollivier curvature.

\begin{remark}
This example can also be extended to combinatorial graphs, i.e., graphs for which $P(x,y)=P_0$ for all $x\sim y$.
This is done by taking a Cartesian product with the complete graph $K_{2n}$ with itself, adding another complete graph $K_n$, and putting edges between all vertices $(1,i) \in K_{2n}^2$ and $j \in K_n$.
The subgraph $\{2\ldots 2n\} \times K_{2n}$ replaces the single vertex 1 from the three vertex chain above, the subgraph $\{1\} \times K_{2n}$ replaces vertex 2, and the subgraph $K_n$ replaces vertex 3. It is a bit tedious but straight forward to check that the graph has uniformly positive Ollivier curvature (positive lower bound independent of $n$). On the other hand, the log-Sobolev constant tends to zero as $n \to \infty$ by exactly the same argument as in the three vertex chain. 
\end{remark}

\section{Non-negative Curvature and Log Sobolev inequality}
In this section, we introduce the Laplacian separation principle and prove an (unmodified) log-Sobolev inequality in terms of the diameter for Markov chains with non-negative Ollivier Ricci curvature.
We now give an intuition about the key arguments:
Assume $\Delta f \leq -C$ on a subset $W\subset V$.
Then,
\begin{itemize}
\item $\lambda_W \geq \frac{C} {2\|f\|_\infty}$,
\item $|\partial W| \geq C m(W)$.
\end{itemize}
In order to construct suitable functions $f$, we use the Laplacian separation principle introduced below to find a solution to the eikonal equation $|\nabla g| = 1$ and
\[
\Delta g|_W \leq C \leq \Delta g|_{W^c}
\]
for some unknown constant $C$. 
We then set $f = \phi \circ g$ for a suitable concave increasing function $\phi:\R \to \R$.
By a version of a discrete chain rule (see Subsection~\ref{sec:ChainRule}), the bound on $\Delta g$ can be improved exploiting concavity of $\phi$ and $|\nabla g|=1$
, i.e., $\Delta f|_W \leq C' < C$.
We now give the details.

\subsection{Eikonal equation and Laplacian separation principle}\label{sec:LaplaceSeparation}

Here, we present a mean curvature inspired Laplacian separation principle based on \cite{hua2021every}.
The key motivation comes from the fact that isoperimetric subsets of a Riemannian manifold, i.e., sets minimizing the surface area given the volume, have a boundary with constant mean curvature, up to singularities, see \cite[Appendix A.1]{milman2009role} and references therein.

In order to understand mean curvature in a discrete setting, it seems hopeless to just consider subsets of the graph, as there are only finitely many such subsets. Therefore, one would not expect to find any constant mean curvature subsets in a weighted graph. Instead, the intuition about discrete mean curvature is based on the level set approach pioneered in \cite{evans1991motion} for Euclidean spaces.

Indeed, in the smooth case, the level set mean curvature is related to the eikonal equation $|\nabla f|=1$. That is, if $|\nabla f|=1$, then the mean curvature $H(x)$ at point $x$ of the level set $\{y:f(y)=f(x)\}$ satisfies 
\[
H(x)= \nabla \cdot \left(\frac{\nabla f}{|\nabla f|}\right)(x)= \Delta f(x).
\]

Hence, in order to mimic a constant mean curvature hypersurface in a discrete space, we aim to find a solution to the discrete eikonal equation $|\nabla f|=1$ such that $\Delta f = const.$ on a given level set.
While this seems not possible due to compatibility problems with the non-reversible case, we can still treat a relaxed version of this problem.
That is that $\Delta f=const.$ on a given vertex cut set (without the restriction that the vertex cut set is a level set), and that $|\nabla f|=1$ is only required outside of the vertex cut set.

More precisely, we assume, we have a partition of the vertex set $V=X \dot \cup K \dot \cup Y$ such that there are no edges between $X$ and $Y$.
Following \cite{hua2021every}, we want to find a function with constant gradient on $X\cup Y$, minimal on $X$ and maximal on $Y$, such that moreover, the Laplacian of $f$ is constant on $K$.
By non-negative Ollivier curvature, it will follow that the cut set $K$ separates the Laplacian $\Delta f$, i.e., $\Delta f|_X \geq const. \geq \Delta f|_Y$, meaning that $f$ sovles the Laplacian separation problem discussed above.

While in \cite[Lemma~3.3]{hua2021every}, it was assumed that $K$ is connected, we will drop this condition by refining the proof idea.
Moreover, we drop the condition from \cite{hua2021every}  of having at least two ends. We now give the Laplacian separation principle.
\begin{theorem}\label{thm:ConstLaplacian} Let $G=(V,P,d)$ be a reversible metric Markov chain with non-negative Ollivier curvature.
Assume $V=X \dot \cup K \dot \cup Y$ for a finite set $K$ with $E(X,Y) = \emptyset$.
Then, there exists a function $f \in \Lip(1)$ satisfying
\begin{enumerate}[(i)]
\item $\Delta f = C= const.$ on $K$,
\item $f= \min\{g \in \Lip(1): g|_K = f_K\}$ on  $X$, and
\item $f= \max\{g \in \Lip(1): g|_K = f_K\}$ on  $Y$.
\end{enumerate}
Moreover, $\Delta f \geq C$ on $X$ and $\Delta f \leq C$ on $Y$.
\end{theorem}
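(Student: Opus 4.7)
The natural plan is to parameterize candidate functions by their values on the cut $K$, and reduce the problem to a finite-dimensional existence question together with a propagation-of-inequality argument driven by non-negative Ollivier curvature.

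First I would fix $g\colon K \to \R$ that is $1$-Lipschitz with respect to $d|_K$, and build the canonical extension $\tilde g \colon V \to \R$ by
\[
\tilde g(x) = \max_{z \in K}\bigl(g(z) - d(x,z)\bigr) \quad (x \in X), \qquad \tilde g(y) = \min_{z \in K}\bigl(g(z) + d(y,z)\bigr) \quad (y \in Y).
\]
A short check, using $E(X,Y)=\emptyset$ so that $X$ and $Y$ communicate only through $K$, shows $\tilde g \in \Lip(1)$ and that on $X$ (resp.\ $Y$) these are pointwise the smallest (resp.\ largest) 1-Lipschitz extensions of $g$. Conditions $(ii)$ and $(iii)$ are then built in by construction, and the task reduces to finding such a $g$ for which $\Delta \tilde g$ is constant on $K$.

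Second, I would obtain the ``moreover'' part of the conclusion from non-negative Ollivier curvature, independently of which $g$ we use. For $x \in X$, pick $z_x \in K$ attaining the max defining $\tilde g(x)$ and follow a distance-realizing path $x = v_0 \sim v_1 \sim \cdots \sim v_n = z_x$. By construction $\tilde g(v_{i}) - \tilde g(v_{i-1}) = d(v_{i-1},v_{i})$ along the path, so the very definition of $\kappa$ combined with $\kappa \geq 0$ gives $\Delta \tilde g(v_{i-1}) \geq \Delta \tilde g(v_i)$ edge by edge. Iterating yields $\Delta \tilde g(x) \geq \Delta \tilde g(z_x) = C$. The symmetric argument applied on $Y$, using $\tilde g(y) = g(z_y) + d(y,z_y)$, gives $\Delta \tilde g(y) \leq C$.

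Third, I would establish existence of $g$ making $\Delta \tilde g|_K$ constant. The space $\mathcal{L}(K) := \{g \colon K \to \R : g \in \Lip(1)\}/\R$ is compact and convex, and $\Phi(g) := \Delta \tilde g|_K \bmod \R$ is a continuous map into $\R^K/\R$, so the task is to show $0$ lies in its image. My plan is to use the minimax formulation
\[
C^* := \inf_{g \in \mathcal{L}(K)} \max_{z \in K}\Delta \tilde g(z),
\]
achieved by compactness at some $g^*$, and to argue that if $\Delta \tilde g^*|_K$ were not constant one could lower a maximizer-value $g^*(z^+)$ by a small amount while staying in $\mathcal{L}(K)$, strictly decreasing $\max_z \Delta \tilde g(z)$ and contradicting minimality. (Alternatively, one runs the gradient-type flow $\dot g(z) = \Delta \tilde g(z) - \tfrac{1}{|K|}\sum_{z'}\Delta \tilde g(z')$ on $\mathcal{L}(K)$ and extracts a stationary point.)

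The main obstacle is this last existence step: the functional $g \mapsto \Delta \tilde g(z)$ is a sum of a convex and a concave piecewise-linear part (the former from the $X$-extension, the latter from the $Y$-extension), so neither convex analysis nor monotone-flow arguments are immediate, and the $\Lip(1)$ constraint on $K$ must be respected throughout. One may need a perturbation/regularization of the max and min in the extension formulas (e.g.\ soft-max) to obtain smoothness, a topological-degree or Brouwer argument on $\mathcal{L}(K)$, and a passage to the limit. The remaining ingredients — the extension formulas, the Lipschitz verification, and the curvature-driven propagation of Laplacian inequalities — are essentially automatic.
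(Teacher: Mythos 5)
Your setup (the McShane-type minimal/maximal extension operator from $K$, and the edge-by-edge propagation $\Delta \tilde g(v_{i-1}) \geq \Delta \tilde g(v_i)$ along distance-realizing paths via $\kappa \geq 0$) matches the paper and correctly disposes of $(ii)$, $(iii)$ and the ``moreover'' claim. However, the step you yourself flag as the main obstacle --- the existence of boundary data $g$ on $K$ with $\Delta \tilde g|_K$ constant --- is the actual content of the theorem, and your sketch of it does not go through. The proposed local perturbation at a maximizer $z^+$ of $\Delta\tilde g|_K$ has the wrong sign: lowering $g(z^+)$ \emph{increases} $\Delta\tilde g(z^+)$ (through the $-\tilde g(z^+)$ term), while raising it decreases $\Delta\tilde g(z^+)$ but increases $\Delta\tilde g$ at neighbours of $z^+$ in $K$ and may violate the $\Lip(1)$ constraint; so a minimizer of $\max_K\Delta\tilde g$ can well be stuck at a non-constant profile as far as single-coordinate perturbations are concerned. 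More tellingly, your existence argument makes no use of curvature at all, whereas curvature is exactly what the paper uses to unlock the variational problem.

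The paper's resolution is a different (and essential) idea: it optimizes over $f\in\Lip(1)$ satisfying $(ii)$--$(iii)$ a \emph{nested} triple of functionals (first maximize $\min_K\Delta f$, then minimize $m(\argmin(\Delta f|_K))$, then maximize a gap functional on $K$), and perturbs an optimizer by $f\mapsto S(f+\eps\Delta f)$, where $S$ is your extension operator. The point is that non-negative Ollivier curvature guarantees $f+\eps\Delta f\in\Lip(1)$ for small $\eps$ (the infinitesimal form of $\Lip(P_tf)\le\Lip(f)$), so this perturbation stays admissible, dominates $f+\eps C$ with equality on the argmin, and strictly improves one of the three criteria unless $\Delta f|_K$ is already constant. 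If you want to salvage your route, you would need to replace your single-coordinate perturbation by this curvature-driven global one (or find a genuinely curvature-free existence proof, which neither you nor the paper provides); as written, the existence step is a gap.
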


\begin{proof}
We consider three nested optimization problems for $f \in \Lip(1)$ under the constraint of $(ii)$ and $(iii)$:
\begin{enumerate}[(a)]
\item Maximize $\min_K \Delta f$
\item Minimize $m\left( \argmin \left((\Delta f)|_K\right) \right)$
\item Maximize 
\[\max \{f(y)-f(x): x,y \in K\mbox{ and } \Delta f(x) = \min_K \Delta f \mbox{ and } \Delta f(y)> \min_K \Delta f\}.
\]
\end{enumerate}
Let $F_a \subseteq Lip(1)$ be the set of optimizers of $(a)$.
Let $F_b \subseteq F_a$ be the set of optimizers of $(b)$.
Let $F:=F_c \subseteq F_b$ be the set of optimizers of $(c)$.
A compactness argument shows $F \neq \emptyset$.
Let $f \in F$.
Let
\[
Sf(x) := \begin{cases}
f(x)&: x \in K, \\
\min\{g(x) : g \in \Lip(1), g|_K=f|_K \} &: x \in X, \\ 
\max\{g(x) : g \in \Lip(1), g|_K=f|_K \} &: x \in Y. 
\end{cases}
\]

Let $\eps>0$ and $g := S(f + \eps \Delta f)$.
Indeed, the nested optimization problems are motivated by the question which properties are improved when replacing $f$ by $g$.

By non-negative Ollivier curvature, we have $f + \eps \Delta f \in \Lip(1)$ for small $\eps$ and thus, $g \in Lip(1)$.
As $g$ is in the image of $S$, we see that $g$ satisfies $(ii)$ and $(iii)$. 

Let $C=\min_K \Delta f$.
As $f$ satisfies $(ii)$ and $(iii)$, we have $f=Sf$, and thus, $g \geq f+\eps C$ with equality on $\argmin \left((\Delta f)|_K\right)$.
Hence, $\Delta g \geq C$ on $\argmin \left((\Delta f)|_K\right)$.

For $\eps$ small enough, we have $\Delta g(x) > C$ for all $x \in K \setminus \argmin \left((\Delta f)|_K\right)$. 
As $f$ is an optimizer of $(a)$ and $(b)$, we infer
$\Delta g= C$ on $\argmin \left((\Delta f)|_K\right)$.
This implies
$\argmin \left((\Delta f)|_K\right) = \argmin \left((\Delta g)|_K\right)$, and the maximization in $(iii)$ runs over the same vertex set for $f$ and $g$.

Suppose $\max \{f(y)-f(x): x,y \in K, \Delta f(x) = \min_K \Delta f, \Delta f(y)> \min_K \Delta f\}$ is attained at $x$ and $y$.
Then, 
\[
g(y) - g(x) =  f(y)-f(x) + \eps\Delta f(y) - \eps \Delta f(x) > f(y)-f(x)
\]
contradicting that $f$ maximizes $(c)$. Hence, the maximization in $(c)$ is ill posed, meaning that the maximum is taken over the empty vertex set. This shows that $\Delta f = \min_K \Delta f$ on $K$
as desired. 
We finally prove the 'Moreover' statement, i.e. $\Delta f \geq C$ on $X$ and $\Delta f \leq C$ on $Y$.
Let $x \in X$. As $f$ is the minimal Lipschitz extension on $X$, there exists $y \in K$ such that $f(y)-f(x)=d(x,y)$. As $f \in \Lip(1)$, we get $\Delta f(x) \geq \Delta f(y)=C$. The corresponding estimate on $Y$ can be proven similarly.
This finishes the proof of the theorem.
\end{proof}

We remark that the theorem can be generalized to infinite, locally finite Markov chains with a finite subset $K$.
An interesting question is if there is a natural parabolic flow converging to the solution $f$ from the above theorem.

\subsection{Exploiting the gradient via chain rule}
\label{sec:ChainRule}
As discussed in the introduction, no exact chain rule is available for the graph Laplacian \cite{bauer2015li,erbar2012ricci,munch2014li}.
An approximate chain rule via intermediate values for the graph Laplacian is provided in \cite{hua2022extremal}.
Here, we provide an estimate for $\Delta \phi\circ f$ for suitable functions $\phi:\R \to \R$.
We recall
\[
P_0 = \inf_{x,y} {P(x,y)}\cdot{d(x,y)^2}
\]
and
\[
\nabla_- f (x) = \max_{y\sim x} \frac{(f(y)-f(x))_-}{d(x,y)}.
\]

\begin{lemma}\label{lem:LocalChainRule}
Let $\phi:I \to \R$ be concave.
Let $f \in \R^V$.
Let $x \in V$.
Assume $\phi'''(s) \geq 0$ whenever
\[
\min_{y\sim x} f(y) < s < f(x).
\] 
Then at $x$,
\[
\Delta\phi\circ f \leq \phi'(f)\cdot \Delta f + \frac{P_0}2 \phi''(f) \cdot(\nabla_- f)^2.
\]
\end{lemma}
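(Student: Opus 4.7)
The plan is to isolate the first-order Taylor remainder of $\phi$ around $f(x)$ and control it neighborwise. Setting
\[
R_y := \phi(f(y)) - \phi(f(x)) - \phi'(f(x))(f(y)-f(x)),
\]
the definition of the Laplacian directly gives
\[
\Delta(\phi\circ f)(x) - \phi'(f(x))\,\Delta f(x) = \sum_y P(x,y)\, R_y,
\]
so the claim reduces to an upper bound on this sum by $\frac{P_0}{2}\phi''(f(x))(\nabla_- f(x))^2$, a nonpositive quantity. Concavity of $\phi$ puts the graph below its tangent, making every $R_y \leq 0$; consequently, dropping the neighbors with $f(y) \geq f(x)$ enlarges the sum, and it suffices to bound $\sum_{y:\,f(y)<f(x)} P(x,y)\,R_y$ from above.

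For a downward neighbor $y$ with $f(y) < f(x)$, I would apply Taylor's theorem with Lagrange remainder to write $R_y = \frac{1}{2}\phi''(\xi_y)(f(y)-f(x))^2$ for some $\xi_y \in (f(y), f(x))$. Since $(f(y), f(x)) \subset (\min_{y\sim x} f(y), f(x))$, the hypothesis $\phi''' \geq 0$ forces $\phi''$ to be non-decreasing on this interval, so $\phi''(\xi_y) \leq \phi''(f(x)) \leq 0$. Multiplying by the nonnegative $(f(y)-f(x))^2$ preserves the inequality and yields $R_y \leq \frac{1}{2}\phi''(f(x))(f(y)-f(x))^2$ for every downward neighbor.

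Summing against $P(x,y)$ and factoring out $\frac{1}{2}\phi''(f(x)) \leq 0$ reduces the statement to the scalar bound
\[
\sum_{y:\,f(y)<f(x)} P(x,y)(f(y)-f(x))^2 \geq P_0 \,(\nabla_- f(x))^2,
\]
where the direction is reversed by the sign of $\phi''$. This is immediate by discarding all but the single term $y^*$ attaining the maximum in the definition of $\nabla_- f(x)$: that summand equals $P(x,y^*)\,d(x,y^*)^2 \cdot (\nabla_- f(x))^2$, which is at least $P_0 (\nabla_- f(x))^2$ by definition of $P_0$. The one thing to be careful about throughout is sign-tracking, since $\phi''$ is nonpositive and each reduction step either discards nonpositive contributions or multiplies by a nonpositive factor, so every inequality direction has to be checked. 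As a harmless degenerate case, if $x$ has no downward neighbor then $\nabla_- f(x) = 0$ and the estimate is trivial.
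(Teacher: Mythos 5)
Your proof is correct and follows essentially the same route as the paper's: Taylor expansion with Lagrange remainder around $f(x)$, concavity to discard the nonpositive remainders of the unhelpful neighbors, $\phi'''\geq 0$ to replace $\phi''(\xi_y)$ by $\phi''(f(x))$, and the neighbor realizing $\nabla_- f(x)$ together with the definition of $P_0$ to finish. The only difference is cosmetic — you retain all downward neighbors before restricting to the maximizer, while the paper discards down to the single maximizing neighbor immediately — and your sign-tracking at each reduction is accurate.
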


\begin{proof} Let $x \in V$.
We calculate
\[
\Delta \phi \circ f(x) = \sum_y P(x,y)(\phi(f(y)) - \phi(f(x))).
\]
By Taylor expansion of $\phi$ around $f(x)$, we have
\[
\phi(f(y)) - \phi(f(x)) = \phi'(f(x))(f(y)-f(x)) + \frac 1 2 \phi''(\xi_y) (f(y)-f(x))^2  
\]
where $\xi_y$ is between $f(x)$ and $f(y)$.
First assume $\nabla_-f(x)>0$. Then, there is $z$ such that
\[
f(z)-f(x) = - d(x,y)\nabla_-f(x)
\]
Then, by concavity of $\phi$,
\begin{align*}
\Delta f(x) &\leq \frac 1 2 \phi''(\xi_z)P(x,z)(f(z)-f(x))^2 +\sum_y P(x,y) \phi'(f(x))(f(y)-f(x)) \\
&\leq  \frac 1 2 \phi''(f(x))P(x,z)(f(z)-f(x))^2 +\phi'(f(x))\Delta f(x) \\
&\leq \frac {P_0} 2 \phi''(f)\cdot (\nabla_- f(x))^2 +      \phi'(f(x))\Delta f(x) 
\end{align*}
where the second inequality follows as $f(z)<f(x)$ and $\phi'''\geq 0$, and the last inequality follows as $\phi'' \leq 0$. 
In the case $\nabla_-f(x)=0$, we ignore the $\phi''(\zeta_y)$ term and get the desired estimate easily. This finishes the proof.
\end{proof}

We now show how to improve a Laplacian estimate via the chain rule.
\begin{lemma}\label{lem:GlobalChainRule}
Let $W \subset V$. Let $u \in \R^V$ with $\Delta u \leq C$ on $W$ and $\nabla_-f \geq 1$ on $W$.
We write $\beta = 2C/P_0$.
Let
\[
U_0 := \{x:u(y) \geq 0 \mbox{ for all } y \sim x\}.
\]
Let $R>0$. Then, there exists an increasing, concave $\phi:\R \to \R$ with $\phi'\leq 1$ such that for all $x \in W$,
\[
\Delta \phi \circ u(x) \leq  \begin{cases} 0
&: u(x)>R\\
C&:x \in W \setminus U_0\\
C \wedge \frac{-P_0}{2R}&: x \in U_0 \mbox{ and } u(x)\leq R \mbox{ and } C\leq 0 \\
-\frac{C}{\exp({\beta R})-1}&: x \in U_0 \mbox{ and } u(x)\leq R \mbox{ and } C> 0. 
\end{cases} 
\]
Moreover, the last two cases can be unified as
\[
\Delta \phi \circ u \leq - \frac{C/2}{\exp({\beta R})-1}.
\]

\end{lemma}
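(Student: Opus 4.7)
The plan is to construct $\phi$ piecewise---$\phi(s)=s$ on $(-\infty,0]$, a concave piece on $[0,R]$ chosen to match an appropriate ODE, and $\phi(s)=\phi(R)$ on $[R,\infty)$---so that the chain rule of Lemma~\ref{lem:LocalChainRule} delivers the sharp bound on $U_0\cap\{u\leq R\}$, while the linear/constant extensions handle the other regions. On $[0,R]$ I distinguish three regimes. For $C>0$, I would solve $\phi'\,C+\tfrac{P_0}{2}\phi''\equiv-\tfrac{C}{e^{\beta R}-1}$ with $\phi'(0)=1$, obtaining
\[
\phi'(s)=\frac{e^{\beta(R-s)}-1}{e^{\beta R}-1},
\]
which is concave, non-increasing, satisfies $\phi'(R)=0$ and $\phi'''>0$. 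For $-P_0/(2R)<C\leq 0$, I take the quadratic $\phi(s)=s-s^2/(2R)$, which has $\phi''\equiv-1/R$, $\phi'''\equiv 0$, and $\phi'(R)=0$. For $C\leq-P_0/(2R)$, I keep $\phi=\mathrm{id}$ on $(-\infty,R]$. A routine mollification at $s=0$ and $s=R$ makes $\phi\in C^2$ without affecting the bounds.

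Next I would verify the four cases. For $u(x)>R$, $\phi$ attains its supremum at $R$, so each $\phi(u(y))\leq\phi(u(x))$, giving $\Delta\phi\circ u(x)\leq 0$. For $x\in W\setminus U_0$ with $u(x)\leq R$, the tangent-line inequality and $\phi'\leq 1$ yield $\Delta\phi\circ u(x)\leq\phi'(u(x))\Delta u(x)\leq C$ in the $C\geq 0$ or identity subregimes, and when $u(x)\leq 0$ (where $\phi$ is linear) the bound is immediate from $\Delta\phi\circ u(x)=\Delta u(x)\leq C$. For $x\in U_0$ with $u(x)\leq R$, the relevant interval $[\min_{y\sim x}u(y),u(x)]\subseteq[0,R]$ lies in the smooth region with $\phi'''\geq 0$; Lemma~\ref{lem:LocalChainRule}, together with $\Delta u\leq C$, $\nabla_- u\geq 1$, $\phi'\geq 0$, and $\phi''\leq 0$, gives
\[
\Delta\phi\circ u(x)\leq\phi'(u(x))\,C+\tfrac{P_0}{2}\phi''(u(x)),
\]
which by construction equals $-C/(e^{\beta R}-1)$ for $C>0$ (case (d)), is at most $(1-u(x)/R)C-P_0/(2R)\leq-P_0/(2R)$ for the quadratic, and reduces to $\Delta u(x)\leq C$ in the identity case; together these match $C\wedge(-P_0/(2R))$ in case (c).

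Finally, the unified moreover bound follows from elementary comparisons of constants: for $C>0$ a factor of two suffices; for the quadratic subregime the comparison $-P_0/(2R)\leq-(C/2)/(e^{\beta R}-1)$ reduces to $1-e^{-x}\geq x/2$ on $x\in[0,1]$, which holds since $x=2|C|R/P_0\in(0,1]$; for the identity subregime the comparison $C\leq-(C/2)/(e^{\beta R}-1)$ reduces to $e^{\beta R}\leq 1/2$, which holds since $|C|\geq P_0/(2R)\geq P_0\log 2/(2R)$. The main obstacle is calibrating the three pieces of $\phi$ coherently across the regime transitions at $C=0$ and $C=-P_0/(2R)$ so that case (c) gives the stated minimum uniformly, together with the direction of the tangent-line inequality in case (b) when $C<0$ and $u(x)>0$, which forces the careful choice of $\phi=\mathrm{id}$ on $(-\infty,0]$; the mollification technicalities at $s=0,R$ are routine.
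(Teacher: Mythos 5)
Your construction is exactly the paper's: the same three regimes in $C$ (the exponential profile solving $C\phi'+\tfrac{P_0}{2}\phi''=\mathrm{const}$ for $C>0$, the quadratic $s-s^2/(2R)$ for $-P_0/(2R)<C\leq 0$, the identity otherwise), with the linear extension below $0$ and the constant extension above $R$ (you even silently fix the paper's sign slip $\phi'(0)=-1$), and your verification of the four cases and of the unified ``moreover'' bound via $1-e^{-x}\geq x/2$ on $[0,1]$ and $e^{\beta R}\leq e^{-1}$ is the same computation the paper leaves implicit. So this is essentially the paper's proof, carried out in more detail.

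The one point you flag at the end --- case (b) with $C<0$ and $0<u(x)\leq R$ in the quadratic regime --- is a genuine residual gap, and your proposed fix does not close it: taking $\phi=\mathrm{id}$ on $(-\infty,0]$ only rescues the sub-case $u(x)\leq 0$, while for $u(x)>0$ the tangent-line bound gives $\Delta\phi\circ u(x)\leq\phi'(u(x))\,\Delta u(x)\leq\phi'(u(x))\,C$, which for $C<0$ and $\phi'(u(x))<1$ lies \emph{above} $C$; Lemma~\ref{lem:LocalChainRule} is not applicable there either, since the interval $(\min_{y\sim x}u(y),u(x))$ straddles $0$, where $\phi''$ jumps downward and $\phi'''$ has a negative singular part that mollification cannot remove. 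You should be aware that the paper's own proof dismisses this sub-case with ``the conclusion follows similarly,'' so you are not behind the paper here; and in every application of the lemma in the paper the case $x\in W\setminus U_0$ only arises with a positive constant, so nothing downstream is affected. Still, if you want the lemma as stated, you need an additional argument (or a weakened case (b)) for $x\in W\setminus U_0$ with $C<0$ and $u(x)>0$.
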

\begin{proof}
We first assume $C> 0$. Then for concave, increasing $\phi$ with $\phi''' \geq 0$, we have on $W$
\[
\Delta \phi \circ f \leq C \phi'(f) +  \frac {P_0}2 \phi''(f).
\]
Let $\beta = 2C/P_0$.
On $[0,R]$, we define
\[
\phi(s) = \frac{1- \exp(-\beta s) - \beta s \exp(-\beta R)}{\beta(1-\exp(-\beta R))}
\]
giving
\[
C\phi' + \frac {P_0}2 \phi'' = \frac{-C}{ \exp(\beta R)-1}.
\]
We extend $\phi$ linearly outside the interval $[0,R]$, i.e., $\phi(s)=-s$ for $s<0$ as $\phi(0)=0$ and $\phi'(0)=-1$. Moreover, we have $\phi'(R)=0$, and thus, we set $\phi(s)=\phi(R)$ for $s>R$.
Applying Lemma~\ref{lem:LocalChainRule}, the claim follows easily in the case $C>0$.

We now consider the case $\frac {- P_0}{2R} \leq C \leq  0$.
For $s \in [0,R]$, we define
\[
\phi(s) =  \frac{2Rs - s^2}{2R} 
\]
giving
\[
C\phi' + \frac {P_0}2 \phi'' = \frac{C(2R - 2s) -P_0}{2R} \leq \frac  {-P_0}{2R} 
\]
As above, we have $\phi'(0)=1$ and $\phi'(R)=0$, and the conclusion follows similarly.
Finally, in the case $C<\frac {- P_0}{2R}$, we use $\phi(s)=s$ and obtain on $W$
\[
\Delta \phi \circ u = \Delta u \leq C  
\]
where we used $\beta R<-1$.
This finishes the case distinction.
The 'Moreover' statement is a straight forward calculation and thus, the proof is finished.
\end{proof}

\subsection{Log Sobolev and Dirichlet eigenvalue estimate}
We now use the Laplacian separation principle and the chain rule provided above to prove a lower bound for $\as$. We recall that $c\alpha \leq \as \leq C\alpha$ for universal constants $c,C>0$, see \ref{cor:asalpha}. Also recall that
\[
\as = \inf_{W\subset V} \theta(\lambda_W,\lambda_{W^c})
\]
where $\theta$ is the logarithmic mean.
\begin{theorem}\label{thm:asDiam}
Let $G=(V,P,d)$ be a reversible metric Markov chain with non-negative Ollivier Ricci curvature. Then,
\[
\as \geq \frac {P_0} {16 \diam(G)^2}.
\]
\end{theorem}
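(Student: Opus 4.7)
The plan is first to reduce the claim to proving $\lambda_W \geq P_0/(16 D^2)$ for every proper non-empty $W \subsetneq V$, where $D := \diam(G)$. The elementary inequality $r - 1 \geq \log r$ for $r \geq 1$ gives $\theta(s,t) \geq \min(s,t)$ for the logarithmic mean, whence $\as = \inf_W \theta(\lambda_W, \lambda_{W^c}) \geq \inf_W \min(\lambda_W, \lambda_{W^c}) \geq P_0/(16 D^2)$ follows once the uniform bound on $\lambda_W$ is in hand (the infimum is symmetric under $W \leftrightarrow W^c$).

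Fix $W$ and apply Theorem~\ref{thm:ConstLaplacian} with the partition $V = W \cup \partial_+ W \cup Y$, where $\partial_+ W$ is the outer vertex boundary of $W$ (vertices in $W^c$ adjacent to $W$) and $Y := W^c \setminus \partial_+ W$. This partition has no edges between $W$ and $Y$, and therefore produces $g \in \Lip(1)$ and $C \in \R$ with $\Delta g \geq C$ on $W$, $\Delta g = C$ on $\partial_+ W$, and $g$ equal to the minimum Lipschitz extension on $W$. Setting $h := \max_V g - g$ yields a function $h \in \Lip(1)$ satisfying $0 \leq h \leq D$, $\Delta h \leq -C$ on $W$, and (by tracing a shortest path from $x \in W$ to $\partial_+ W$ along which $g$ grows at unit rate) $\nabla_- h \geq 1$ on $W$. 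The min-extension property forces $g(x) < \max_{\partial_+ W} g \leq \max_V g$ for every $x \in W$, so $h > 0$ on $W$.

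I then invoke Lemma~\ref{lem:GlobalChainRule} with $u = h$, $R = D$, and lemma-constant $-C$. Since $h \geq 0$, the set $U_0$ equals $V$, so the lemma supplies an increasing concave $\phi$ with $\phi(0) = 0$, $\phi' \leq 1$, and $\Delta(\phi \circ h) \leq -A$ on $W$, where in absolute value $A = |C|/\bigl(2\,\bigl|e^{-2CD/P_0} - 1\bigr|\bigr)$. Combining with $\phi \circ h \leq \phi(D) \leq D$ pointwise, this rewrites as $\Delta(\phi \circ h) \leq -(A/D)(\phi \circ h)$ on $W$, and Lemma~\ref{lem:DirichletSupersolution} applied to $\phi \circ h$ (non-negative on $V$ and strictly positive on $W$) gives $\lambda_W \geq A/D$. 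Using $|e^x - 1| \leq e|x|$ for $|x| \leq 1$ and the trivial $|e^x - 1| \leq 1$ for $x \leq 0$, one checks case by case that $A \geq P_0/(4eD)$ whenever $|C|D/P_0 \leq 1$, and $A \geq |C|/2 \geq P_0/(4D)$ whenever $C \geq P_0/(2D)$. These regimes together cover $C \geq 0$ as well as the case $C < 0$ with $|C|$ small, both yielding $\lambda_W \geq P_0/(16 D^2)$.

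The main obstacle is the remaining regime $C < 0$ with $|C|D/P_0 > 1$, where the exponential factor $e^{2|C|D/P_0} - 1$ in the denominator of $A$ degrades the chain-rule estimate. I expect this case to be resolved by the $W \leftrightarrow W^c$ symmetry of the statement: in this regime one has $\Delta g \leq C < 0$ on all of $W^c$ (since $\Delta g = C$ on $\partial_+ W$ and $\Delta g \leq C$ on $Y$), so $g - \min_V g$ is a non-negative superharmonic function on $W^c$ with $\nabla_- g \geq 1$ on $Y$ from the max-Lipschitz-extension property. Running the chain-rule and Dirichlet supersolution argument on $W^c$ in place of $W$ — equivalently, applying Theorem~\ref{thm:ConstLaplacian} with $W^c$ playing the role of the min-extension side — then produces the missing bound on $\lambda_{W^c}$; by the reduction in the first paragraph, this suffices.
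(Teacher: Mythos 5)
The fatal step is your very first reduction: replacing the logarithmic mean by the minimum. The statement you reduce to --- $\lambda_W \geq P_0/(16\diam(G)^2)$ for \emph{every} $W$ --- is false under the hypotheses of the theorem. Take the uniformly biased birth--death chain on $\{0,\ldots,n\}$ with edge weights $w(k,k+1)=\rho^k$ for fixed $\rho>1$ and $P(x,y)=w(x,y)/m(x)$: it has non-negative Ollivier curvature and $P_0=1/(1+\rho)$ independent of $n$, yet for $W=\{0\}$ the test function $1_{W^c}$ gives $\lambda_{W^c}\leq \mathcal E(1_{W^c})/m(W^c)\asymp \rho^{-n}$, exponentially smaller than $P_0/(16n^2)$. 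So the gap you flag in your last paragraph cannot be closed: in that regime you only bound \emph{one} of $\lambda_W,\lambda_{W^c}$, and a lower bound on $\max(\lambda_W,\lambda_{W^c})$ alone gives no lower bound on $\theta(\lambda_W,\lambda_{W^c})$, since the smaller eigenvalue can drag the logarithmic mean to $0$. The symmetry $W\leftrightarrow W^c$ does not help because for the offending $W$ neither run of the argument produces the missing bound on $\lambda_W$ itself.

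The paper's proof is built exactly around this obstruction. From a \emph{single} application of the separation principle with constant $C$ it derives the two mismatched bounds $\lambda_B\geq \frac{P_0}{4R^2}\cdot\frac{\beta R}{e^{\beta R}-1}$ and $\lambda_A\geq \frac{P_0}{4R^2}\cdot e^{\beta R}\cdot\frac{\beta R}{e^{\beta R}-1}$, with $\beta=2C/P_0$ and $R=2\diam(G)$, and then observes that the logarithmic mean of these two numbers equals $\frac{P_0}{4R^2}$ \emph{exactly}, for every $C$: the factor $e^{\beta R}$ that ruins the minimum is cancelled by the $\log$ in the denominator of $\theta$. You already possess both ingredients in your bad regime --- the exponentially degraded chain-rule bound on $\lambda_W$ and the bound of order $|C|/D$ on $\lambda_{W^c}$ coming from $\Delta g\leq C<0$ on $W^c$; feeding this pair into $\theta$ rather than into $\min$ (and doing the same pairing in the other regimes) recovers the theorem. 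The remaining components of your argument (separation principle, chain rule, Dirichlet supersolution lemma) match the paper's up to cosmetic choices of the cut set and the sign normalization.
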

\begin{proof}
Let $V=A \dot\cup B$. We aim to give a lower bound to the logarithmic mean $\theta(\lambda_A,\lambda_B)$ where $\lambda_A$ and $\lambda_B$ are the corresponding Dirichlet eigenvalues.
We apply Theorem~\ref{thm:ConstLaplacian} to $K=\supp(\Delta 1_A)$ and $X=A\setminus K$ and $Y=B\setminus K$ to obtain a function $f$ with $\Delta f = C$ on $K$.
As $f \in \Lip(1)$, we can assume $\diam(G) \leq f \leq 2 \diam (G)$.
Let $g=f \cdot 1_A$.
Then, $\nabla_- g \geq 1$ on $B$ and $\Delta g \leq C$ on $A$. By Lemma~\ref{lem:GlobalChainRule} applied to $W=B$, and $u=g$ and $R=2\diam(G)$ we get for all $y \in B$
\[
\Delta \phi \circ u (y)  \leq -\frac{C/2}{\exp(\beta R)-1}
\]
where $\beta = 2C/P_0$.
Moreover $0 \leq \phi \circ u \leq R$ as $\phi' \leq 1$.
Hence by Lemma~\ref{lem:DirichletSupersolution},
\[
\lambda_B \geq \frac 1 R \cdot \frac{C/2}{\exp(\beta R)-1} = \frac {P_0} {4R^2} \cdot \frac{\beta R}{\exp(\beta R)-1}.
\]
By a similar argument,
\[
\lambda_A \geq \frac {P_0} {4R^2} \cdot \frac{-\beta R}{\exp(-\beta R)-1} = \frac {P_0} {4R^2} \cdot e^{\beta R}\cdot\frac{\beta R}{\exp(\beta R)-1}.
\]
We recall $\theta(s,t)=(s-t)/(\log(s)-\log(t))$ and thus,
\[
\theta(\lambda_A,\lambda_B) \geq \frac {P_0} {4R^2} = \frac {P_0} {16\diam(G)^2}.
\]
As $A,B$ were chosen as an arbitrary partition of $V$, the claim of the theorem follows immediately.
\end{proof}

As $\as$ and $\alpha$ coincide up to a bounded factor (see Corollary~\ref{cor:asalpha}), we obtain the following corollary
\begin{corollary}
Let $(V,P,d)$ be a reversible metric Markov chain. Then,
\[
\alpha \geq \frac{CP_0}{\diam^2}
\]
for a universal constant $C$.
\end{corollary}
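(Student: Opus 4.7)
The plan is to derive this corollary as an immediate consequence of the two results that directly precede it in the paper, with essentially no additional work. Theorem~\ref{thm:asDiam} already provides the quantitative inequality $\as \geq P_0/(16\,\diam(G)^2)$ under the standing non-negative Ollivier curvature hypothesis (which is inherited by the corollary). Corollary~\ref{cor:asalpha}, in turn, asserts the two-sided comparison $c\alpha \leq \as \leq C\alpha$ for some universal constants $c,C>0$, where the relevant direction here is the upper bound $\as \leq C\alpha$, equivalently $\alpha \geq \as/C$.

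The proof I would write is therefore just one line of chaining: apply the upper-bound direction of Corollary~\ref{cor:asalpha} to get $\alpha \geq \as/C$, then insert the Theorem~\ref{thm:asDiam} estimate to obtain
\[
\alpha \;\geq\; \frac{\as}{C} \;\geq\; \frac{1}{C}\cdot \frac{P_0}{16\,\diam(G)^2} \;=\; \frac{C'P_0}{\diam(G)^2},
\]
with $C' := 1/(16C)$ a universal constant, which is the claimed bound.

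There is no substantive obstacle: the work of converting the Dirichlet-eigenvalue quantity $\as$ to the log-Sobolev constant $\alpha$ is already encapsulated in Corollary~\ref{cor:asalpha} (which in turn rests on Theorem~\ref{thm:acapalpha}, Lemma~\ref{lem:acaptheta}, and Theorem~\ref{thm:asacap}), and the geometric input $P_0/\diam^2$ is already delivered by Theorem~\ref{thm:asDiam} via the Laplacian separation principle and the chain-rule lemma. The only minor subtlety worth flagging in the write-up is that the statement of the corollary tacitly carries over the non-negative Ollivier Ricci curvature hypothesis from Theorem~\ref{thm:asDiam}, so I would either make this hypothesis explicit in the proof or remark that it is assumed throughout the section.
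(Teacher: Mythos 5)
Your proposal is correct and is exactly the paper's argument: the corollary is obtained by chaining Theorem~\ref{thm:asDiam} with the comparison $\as \leq C\alpha$ from Corollary~\ref{cor:asalpha}. Your remark that the non-negative Ollivier curvature hypothesis must tacitly be carried over from Theorem~\ref{thm:asDiam} is also a valid observation about the corollary's statement.
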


\section{Non-negative curvature and isoperimetry}\label{sec:RelatedDiameter}

By the pioneering work of Ledoux \cite{ledoux2011concentration} and Milman \cite{milman2009role,milman2010Isoperimetric} on weighted manifolds with non-negative Ricci curvature, various concentration of measure inequalities imply various isoperimetric inequalities. 
This goes under the name 'Reversing the hierarchy' as in a general smooth setting one schematically has the following implications:
\begin{align*}
\mbox{Isoperimetric inequalities} &\Rightarrow \mbox{Functional inequalities} \\&\Rightarrow \mbox{Transport entropy inequalites}\\& \Rightarrow  \mbox{Concentration inequalities},
\end{align*}
see \cite{milman2012properties}. While in general the reverse directions are wrong, they still hold true in case of non-negative Ricci curvature.
Here, we give discrete versions of the reversal of hierarchy.
By doing so, we give an affirmative answer to \cite[Conjecture~6.9]{erbar2018poincare} and \cite[Conjecture~6.10]{erbar2018poincare} in case of non-negative Ollivier curvature.
Moreover our isoperimetric inequalities seem the first ones which are explicit and only require an arbitrary single point from the concentration profile.

\subsection{A conjecture by Erbar and Fathi}\label{sec:ErbarFathiConjecture}

We recall \cite[Conjecture~6.9]{erbar2018poincare} which we answer affirmatively for graphs with non-negative Ollivier Ricci curvature in this section.
Assume we have concentration profile $Me^{-\beta r}$, i.e.,
\[
m(A_r^c) \leq Me^{-\beta r}
\]
whenever $m(A)\geq \frac 1 2$ where 
$A_r = \{x: d(x,A)> r\}.$
It was conjectured in \cite[Conjecture~6.9]{erbar2018poincare} that this implies
$\lambda \geq C(M) \beta^2$.

To prove this conjecture, we use a notion of observable diameter $\diamess$ capturing the concentration of measure.
The observable diameter  is defined as 
\begin{align}
\diamess^{(\eps)} := \sup_{\substack{m(A)\geq \eps\\m(\cl(B)) \geq \eps}} d(A,B)\label{eq:ObsDiam}
\end{align}

where $\cl$ denotes the closure, i.e. it adds the outer vertex boundary. The reason behind taking closure is that if a large fraction of the mass is located at a single vertex, then $A$ and $B$ would have to overlap so that their distance would be zero.
The key step to prove the conjecture is to show for non-negatively curved graphs,
\[
h \simeq \frac{1}{\diamess^{(1/8)}}
\]
where $h$ is the Cheeger constant.
The precise statement is given in Corollary~\ref{cor:hlargerdiamess} and Theorem~\ref{thm:hsmallerdiamess}.
We then show in Lemma~\ref{lem:concentrationDiamess} that
\[
\diamess \lesssim \frac 1 \beta
\]
and the conjecture follows easily by the Cheeger-Buser inequality
\[
\lambda \simeq h^2,
\]
for for graphs with non-negative Ollivier Ricci curvature, see \cite[Theorem~3.2.2]{munch2019non} for the Buser inequality $\lambda \gtrsim h^2$.

\subsection{Isoperimetry and observable diameter}

To motivate our new isoperimetric inequality, we
recall a result from \cite{munch2019non} stating that for non-negatively curved graphs,
\[
h \geq \frac{P_0}{4 \diam(G)},
\]
see \cite[Theorem~3.5.1 and 3.2.2]{munch2019non}.
We notice that $h$ is not sensitive to narrow tails, but the diameter is. 
This discrepancy can be seen as reason why the above estimate does not give precise bounds in case of high dimension, as high dimension generally comes with narrow tails. 

This gives motivation to use a notion of observable diameter (see \eqref{eq:ObsDiam}) which can be seen as a diameter like quantity enforced to be insensitive to narrow tails. 
In the smooth setting it is well known that the observable diameter as function in $\eps$ is, up to constant factors, precisely the inverse function of the concentration profile, see e.g. \cite{ozawa2015estimate}. 
We give a discrete version in Section~\ref{sec:ObsDiam}.
For our isoperimetric inequality, we recall the boundary measure
\[
|\partial W| = -\mathcal E(1_W,d(W,\cdot)) = \sum_{\substack{x \in W\\y\notin W}} P(x,y)m(x)d(x,y),
\]
the minimum transition rate
\[
P_0 = \inf_{x\sim y}P(x,y)d(x,y)^2
\]
and the observable diameter
\[
\diamess^{(\eps)} = \max_{\substack{m(A)\geq \eps \\ m(\cl(B)) \geq \eps}} d(A,B).
\]

We now give our main isoperimetric inequality in terms of the observable diameter. 
We remark that the well known method of using gradient estimates for the heat semigroup seem to be not powerful enough to prove this estimate in a discrete setup due to the lack of a discrete chain rule. Instead, we will use the Laplacian separation principle with which we already proved the log-Sobolev inequality. 
Another feature distinguishing our result from the smooth setting is that we allow to choose the Markov operator $P$ and the distance function independently, while in the smooth setting, the underlying Laplace operator uniquely determines the distance function.

\begin{theorem}\label{thm:IsoperimetryConcentration}
Let $G=(V,P,d)$ be  a reversible metric Markov chain with non-negative Ollivier curvature.
Let $W \subset V$ with $m(W)\leq \frac 1 2$. Let $\eps \leq \frac 1 8$.
Then,
\[
|\partial W| \geq P_0\cdot \frac{(m(W) \wedge \eps) \log (1/\eps) }{6\diamess^{(\eps)}}.
\]
\end{theorem}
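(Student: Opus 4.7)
The strategy combines the Laplacian separation principle (Theorem~\ref{thm:ConstLaplacian}) with the chain rule (Lemma~\ref{lem:GlobalChainRule}), together with a standard integration by parts. The underlying identity I plan to exploit is that for any $h \in \Lip(1)$,
\[
\left|\int_W \Delta h\, dm\right| = \left|\sum_{x\in W,\, y\notin W} m(x) P(x,y)(h(y)-h(x))\right| \leq |\partial W|,
\]
so a pointwise bound $\Delta h \leq -c$ on a subset of $W$ of measure $\alpha$, together with $\Delta h \leq 0$ on the rest of $W$, yields $|\partial W| \geq c\alpha$. The goal is to build such an $h$ with $c \gtrsim P_0 \log(1/\eps) /\diamess^{(\eps)}$ and $\alpha \geq m(W)\wedge \eps$.

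First I would use the observable diameter $R := \diamess^{(\eps)}$ to choose a partition $V = X \dot\cup K\dot\cup Y$ with $E(X,Y)=\emptyset$, where $Y$ contains $W$ (or, in the case $m(W)<\eps$, a small thickening of $W$ chosen so the boundary integral still refers to $W$), and $X$ is a target set with $m(X) \geq \eps$. By the very definition of $R$, the graph distance from the bulk of $Y$ to $X$ is at most $R$, so after applying Theorem~\ref{thm:ConstLaplacian} the resulting $f \in \Lip(1)$ with $\Delta f = C$ on $K$, $\Delta f \geq C$ on $X$, and $\Delta f \leq C$ on $Y$ has range controlled by $R$. A shift puts $f$ into $[0,r]$ with $r \leq R$; I would then work with either $f$ or $r-f$, depending on which orientation places $W$ on the side where Lemma~\ref{lem:GlobalChainRule} produces a negative bound, ensuring additionally that $\nabla_- u \geq 1$ on $W$ and that $u \geq 0$ so that $U_0 = V$ in the notation of that lemma.

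Next I would apply Lemma~\ref{lem:GlobalChainRule} with parameter $R_{\mathrm{chain}}$ tuned to pull out the logarithmic factor. Since Lemma~\ref{lem:GlobalChainRule} furnishes a concave increasing $\phi$ with $\phi' \leq 1$, the composition $h := \phi \circ u$ lies in $\Lip(1)$ automatically. Choosing $\beta R_{\mathrm{chain}} = \log(1/\eps)$ with $\beta = 2C/P_0$ gives $C = P_0 \log(1/\eps)/(2R_{\mathrm{chain}})$ and turns the Lemma's bound into $\Delta(\phi \circ u) \leq -C/(2(\exp(\beta R_{\mathrm{chain}})-1)) \approx -C\eps$. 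The crucial calibration is to match $R_{\mathrm{chain}}$ to the effective range of $u$ on $W$ rather than the full diameter $R$: by arranging $X$ so that $u$ spans only an interval of length $R_{\mathrm{chain}} \asymp R/\log(1/\eps)$ on $W$, the factor $\eps$ in the chain-rule bound is compensated and the resulting pointwise bound is $\Delta h \leq -c$ with $c \asymp P_0 \log(1/\eps)/R$, as desired.

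The main obstacle will be this calibration: producing a partition in which the Laplacian-separated $f$ has an effective range on $W$ of the right size $R/\log(1/\eps)$, so that the exponential amplification in $\phi$ converts into the logarithmic factor rather than the lossy factor $\eps$ one gets from a naive application of the chain rule with $R_{\mathrm{chain}} = R$. A secondary issue is handling the regime $m(W) < \eps$ cleanly, where the thickening of $W$ used to build the partition must not enlarge the boundary term; this should follow from the inclusion $|\partial W'| \geq |\partial W|$ for suitable supersets $W' \supseteq W$ built from level sets of $u$, but it is the kind of bookkeeping step that determines the numerical constant $1/6$ in the final bound.
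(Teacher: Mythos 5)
You have assembled the right ingredients (the separation principle, the chain rule, and summation by parts against an indicator), but the core of your plan --- the ``calibration'' --- does not work. First, the constant $C$ produced by Theorem~\ref{thm:ConstLaplacian} is determined by the partition; it is not a parameter you can set by choosing $R_{\mathrm{chain}}$, so the step ``choosing $\beta R_{\mathrm{chain}}=\log(1/\eps)$ gives $C=P_0\log(1/\eps)/(2R_{\mathrm{chain}})$'' has the logic backwards. Second, even granting the calibration, the numbers do not come out: with $\beta R_{\mathrm{chain}}=\log(1/\eps)$ the chain rule yields a pointwise bound of order $-C\eps$, and with $R_{\mathrm{chain}}\asymp R/\log(1/\eps)$ this is $-P_0\,\eps\log(1/\eps)^2/(2R)$; multiplied by a set of measure $m(W)\wedge\eps$ it misses the target by a factor of order $\eps\log(1/\eps)$. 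In fact no pointwise bound $\Delta h\le -c$ with $c\asymp P_0\log(1/\eps)/R$ can hold on all or most of $W$: on a cycle $\Z_n$ with $W$ half the cycle, integrating such a bound over $W$ would force $|\partial W|\gtrsim P_0\log(1/\eps)/\diam$, which blows up as $\eps\to 0$ while $|\partial W|$ stays of order $P_0/n$. Finally, nothing in your outline forces $\Delta h\le 0$ on the ``rest of $W$'': on $W\setminus U_0$ Lemma~\ref{lem:GlobalChainRule} only gives $\Delta(\phi\circ u)\le C$, possibly positive, and the measure of $W\setminus U_0$ is not controlled.

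The paper resolves exactly these points differently: it takes $K$ to be the inner boundary of $W$ itself (no thickening, no target set of measure $\eps$), and splits into cases on whether $C\ge C_0:=P_0\log(1/\eps)/(6R)$ with $R=\diamess^{(\eps)}$. If so, $|\partial W|\ge\mathcal E(1_W,-f)\ge Cm(W)$ directly, with no chain rule. If not, the chain rule is run with the \emph{fixed} threshold $C_0$ in place of $C$ and with parameter $R$, and --- crucially --- the resulting $\Delta(\phi\circ f)$ is summed over the complement $U=V\setminus W$, which has measure $\ge 1/2$, not over a subset of $W$ of measure $m(W)\wedge\eps$. The pointwise bound there is only about $-C_0\eps$, but it holds on $U$ minus two exceptional sets, $\{f>R\}$ and $\cl(\{f<0\})$, each of measure $\le\eps$; producing that measure control via the choice of a level $r$ with $m(\{f\ge R+r\})\ge\eps$ maximal is the actual role of the observable diameter. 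The factor $m(W)\wedge\eps$ then falls out of the case distinction ($Cm(W)$ in one case, a constant times $C_0\eps$ in the other), not from locating a subset of $W$ of that measure. I recommend reworking your argument around a case distinction on $C$ and around testing against $1_{V\setminus W}$ instead of $1_W$.
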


\begin{proof}
Let $K$ be the inner vertex boundary of $W$. Let $X=W\setminus K$ and $Y=V\setminus W$.
By the Laplacian separation principle (Theorem~\ref{thm:ConstLaplacian}), there exists a function $f \in \Lip(1)$ with $\Delta f = C$ on $K$.
Let $R:=\diamess^{(\eps)}$. We proceed with case distinction with respect to $C$.

We first assume \[C \geq \frac{P_0 \log(1/\eps)}{6R}.\]
Then,
\[
|\partial W| \geq \mathcal E(1_W,-f) \geq Cm(W),
\]
and the claim follows easily in this case.

Now assume 
\[C < C_0:= \frac{P_0 \log(1/\eps)}{6R}.\]

We choose $r \in \R$ maximal so that $m(\{f \geq R+r\}) \geq \eps$.
As $f \in \Lip(1)$ and by maximality of $r$,
\[
m(\{f>R+r\})\leq  \eps \quad\mbox{ and } \quad m(\cl(\{f<r\})) \leq \eps.
\]
Without loss of generality, we assume $r=0$.
By Lemma~\ref{lem:GlobalChainRule} applied to $V\setminus W$, there exists $g = \phi \circ f \in \Lip(1)$ such that on $V \setminus W$,
\[
\Delta g(x) \leq \begin{cases}
0 &: f(x)>R, \\
C_0 &: x \in \cl(\{f<0\}), \\
-\frac{C_0}{\exp(\beta R)-1}&: \mbox{ else},
\end{cases}
\]
where $\beta =2C_0/P_0$.
We write $U=V\setminus W$ and
$U_R = U \cap \{f>R\}$ and $U_0 = U \cap \cl(\{f<0\})$ and $\Uint = U \setminus U_R \setminus U_0$.
Then,
\begin{align*}
|\partial W| \geq \mathcal E(1_{U},g) &\geq \frac{C_0}{\exp(\beta R) - 1} m(\Uint) - C_0m(U_0) \\
&\geq \frac{C_0}{\exp(\beta R) - 1} (m(U)-2\eps) - C_0\eps\\
&\geq \frac{C_0/4}{\exp(\beta R) - 1}  -  C_0\eps
\end{align*}
where we used $\eps\leq 1/8$ and $m(U) \geq \frac 1 2$ in the last estimate.
As $C_0 = \frac{P_0 \log(1/\eps)}{6R}$, we get $\exp(\beta R)  \leq \eps^{-1/6}$ and thus,
\begin{align*}
|\partial W| \geq C_0 \left( \frac{1/4}{\eps^{-1/3} - 1} - \eps \right) \geq C_0 \eps
\end{align*}
where we used $\eps \leq 1/8$. Now the claim follows easily which finishes the case distinction and thus the proof.
\end{proof}

It turns out that the case $\eps<m(W)$ does not give any improvement over the choice $\eps=m(W)$, so that we easily obtain the following corollary.
\begin{corollary}\label{cor:IsoperimetryConcentrationLargeEps}
Let $G=(V,P,d)$ be  a reversible metric Markov chain with non-negative Ollivier curvature.
Let $W \subset V$ with $m(W)\leq \frac 1 2$. Let $\eps \in \left[\frac{m(W)}{4}, \frac 1 8 \right]$.
Then,
\[
\frac{|\partial W|}{m(W)} \geq P_0\cdot \frac{\log (1/\eps) }{24\diamess^{(\eps)}}.
\]
\end{corollary}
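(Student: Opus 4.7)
The plan is to derive Corollary~\ref{cor:IsoperimetryConcentrationLargeEps} as an immediate consequence of Theorem~\ref{thm:IsoperimetryConcentration} by a trivial case split on whether $\eps \geq m(W)$ or $\eps \in [m(W)/4, m(W))$. No new geometric input is needed; all the content is encoded in the lower constraint $\eps \geq m(W)/4$, which is exactly tuned to compensate for the smaller of the two factors appearing in $m(W) \wedge \eps$.

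Concretely, I would start by applying Theorem~\ref{thm:IsoperimetryConcentration} verbatim at the given $\eps \leq \tfrac18$, yielding
\[
|\partial W| \;\geq\; P_0 \cdot \frac{(m(W) \wedge \eps)\log(1/\eps)}{6\,\diamess^{(\eps)}}.
\]
Dividing both sides by $m(W)$ reduces the claim to showing that
\[
\frac{m(W) \wedge \eps}{m(W)} \;\geq\; \frac{6}{24} \;=\; \frac14.
\]
If $\eps \geq m(W)$ the ratio equals $1$, and we in fact obtain a bound four times stronger than claimed. If $\eps \in [m(W)/4,\, m(W))$, the ratio equals $\eps/m(W)$, which is at least $1/4$ by hypothesis. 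In either regime the ratio is $\geq 1/4$, so the constant $6$ in Theorem~\ref{thm:IsoperimetryConcentration} is absorbed into the factor $24$ in the corollary.

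There is genuinely no obstacle: the corollary is a repackaging of Theorem~\ref{thm:IsoperimetryConcentration} tailored to bounding the Cheeger-type ratio $|\partial W|/m(W)$ rather than the raw boundary measure. The informal remark preceding the corollary, that \emph{the case $\eps<m(W)$ does not give any improvement over the choice $\eps=m(W)$}, is exactly the observation above that $(m(W)\wedge \eps)/m(W)$ saturates at $1$ as soon as $\eps \geq m(W)$; the lower cutoff $\eps \geq m(W)/4$ is simply the largest range of smaller $\eps$ for which the same lower bound (up to a loss of a factor $4$) still holds.
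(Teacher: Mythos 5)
Your proof is correct and is exactly the argument the paper intends: the corollary is stated as an immediate consequence of Theorem~\ref{thm:IsoperimetryConcentration}, and your observation that $(m(W)\wedge\eps)/m(W)\geq 1/4$ on the range $\eps\in[m(W)/4,1/8]$ is precisely how the constant $6$ becomes $24$.
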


Plugging in $\eps = 1/8$ gives the following estimate for the Cheeger constant.
\begin{corollary}\label{cor:hlargerdiamess}
For graphs with non-negative Ollivier curvature,
\[
h \geq \frac {P_0}{12  \diamess^{(1/8)} }.
\]
\end{corollary}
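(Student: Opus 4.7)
The plan is to deduce the Cheeger estimate as an immediate specialization of Corollary~\ref{cor:IsoperimetryConcentrationLargeEps} by fixing $\eps = 1/8$. For any $W \subset V$ with $m(W) \leq 1/2$, one has $m(W)/4 \leq 1/8$, so the value $\eps = 1/8$ lies in the admissible interval $[m(W)/4,\,1/8]$ required by the hypothesis of that corollary. All the substantial work — the Laplacian separation principle of Theorem~\ref{thm:ConstLaplacian} and the chain-rule improvement of Lemma~\ref{lem:GlobalChainRule} — has already been packaged into the isoperimetric concentration inequality, so no further curvature argument needs to be carried out here.

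With this choice of $\eps$, Corollary~\ref{cor:IsoperimetryConcentrationLargeEps} yields
\[
\frac{|\partial W|}{m(W)} \;\geq\; \frac{P_0 \log 8}{24\,\diamess^{(1/8)}}
\]
uniformly in $W$. The final step is the numerical simplification: since $e^2 < 8$, we have $\log 8 > 2$, so $\log 8 / 24 > 1/12$, and therefore
\[
\frac{|\partial W|}{m(W)} \;\geq\; \frac{P_0}{12\,\diamess^{(1/8)}}.
\]
Taking the infimum over all $W$ with $m(W) \leq 1/2$ produces the claimed bound $h \geq P_0/(12\,\diamess^{(1/8)})$. There is essentially no obstacle at this stage; the only thing to watch is that the admissibility condition $\eps \geq m(W)/4$ genuinely allows the single universal choice $\eps = 1/8$ to work for every $W$ in the Cheeger infimum, which is exactly what the constraint $m(W) \leq 1/2$ ensures.
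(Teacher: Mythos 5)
Your proposal is correct and matches the paper's own derivation, which likewise obtains the corollary by plugging $\eps = 1/8$ into Corollary~\ref{cor:IsoperimetryConcentrationLargeEps} and using $\log 8 > 2$. The admissibility check $\eps = 1/8 \in [m(W)/4, 1/8]$ for all $W$ with $m(W) \leq 1/2$ is exactly the right point to verify.
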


A key advantage of using the observable diameter is that we also get the reverse inequality for the Cheeger constant.
For convenience, we restrict ourselves here to the case of the combinatorial distance.
We recall
\[
\D = \max_{x \in V} \Delta d(x,\cdot)(x) = \max_{x\in V} \sum_{y\neq x} P(x,y).
\]

\begin{theorem}\label{thm:hsmallerdiamess}
Let $G=(V,P,d)$ be a reversible metric Markov chain with combinatorial distance and $m(V)=1$.
Then,
\[
h \leq \frac{57\D}{\diamess^{(1/8)}}.
\]
\end{theorem}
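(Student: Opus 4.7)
The plan is to exhibit a vertex cut with small boundary-to-mass ratio by slicing a distance function that witnesses the observable diameter. Write $R := \diamess^{(1/8)}$. Since $V$ is finite, the supremum in the definition of $\diamess^{(1/8)}$ is attained: fix $A, B \subset V$ with $m(A) \geq 1/8$, $m(\cl(B)) \geq 1/8$, and $d(A,B) = R$. If $R \leq 1$, the conclusion follows from the elementary bound $h \leq \D$, so assume $R \geq 2$. Set $f(x) := d(x,A)$: this is an integer-valued $1$-Lipschitz function with $f = 0$ on $A$ and $f \geq R$ on $B$. Denote $V_k := \{f \leq k\}$.

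The first key step is a discrete coarea estimate: I claim $\sum_{k=0}^{R-1}|\partial V_k| \leq \D/2$. Setting $u := f \wedge R$ and swapping $x \leftrightarrow y$ via reversibility ($P(x,y)m(x) = P(y,x)m(y)$), one obtains
\[
\sum_{k=0}^{R-1}|\partial V_k| \;=\; \sum_{x,y}P(x,y)m(x)(u(y)-u(x))_+ \;=\; \tfrac{1}{2}\sum_{x,y}P(x,y)m(x)|u(x)-u(y)|.
\]
Because $u$ is $1$-Lipschitz with respect to the combinatorial distance, $|u(x)-u(y)| \leq 1$ whenever $P(x,y) > 0$, so the right-hand side is bounded by $\tfrac{1}{2}\sum_x m(x) \sum_{y\neq x}P(x,y) \leq \D/2$.

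Next I bound the measures of the level sets from below. Since $A \subseteq V_0$, we have $m(V_k) \geq m(A) \geq 1/8$ for every $k \geq 0$. For $k \leq R-2$ I claim $\cl(B) \subseteq V_k^c$: for $x \in B$ we have $f(x) \geq R > k$, and for $y \in \cl(B) \setminus B$ some neighbour $x \in B$ gives $f(y) \geq f(x) - 1 \geq R - 1 > k$. Therefore $m(V_k^c) \geq m(\cl(B)) \geq 1/8$, and consequently $\min(m(V_k), m(V_k^c)) \geq 1/8$ for every $k \in \{0, 1, \ldots, R-2\}$. This step is exactly where the closure in the definition of $\diamess$ enters essentially: a single heavy vertex sitting in the outer boundary of $B$ must be allowed to account for the required mass.

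To conclude, reversibility yields $|\partial W| = |\partial W^c|$, hence $|\partial W| \geq h \cdot \min(m(W), m(W^c))$ for every $W$. Applying this to $V_0, \ldots, V_{R-2}$ and combining with the coarea bound,
\[
\frac{h(R-1)}{8} \;\leq\; \sum_{k=0}^{R-2} h\min(m(V_k), m(V_k^c)) \;\leq\; \sum_{k=0}^{R-1}|\partial V_k| \;\leq\; \frac{\D}{2},
\]
so $h \leq 4\D/(R-1) \leq 57\D/R$ for $R \geq 2$. The argument is essentially routine once the level-set foliation is set up; the only delicate points are the inclusion $\cl(B) \subseteq V_{R-2}^c$, which genuinely relies on the $\cl$ appearing in the definition of the observable diameter, and the symmetrization of the coarea sum via reversibility. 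No positive-curvature input is used, which is reassuring since the theorem is stated without any curvature assumption.
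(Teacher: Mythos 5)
Your proof is correct and follows essentially the same route as the paper: slice the level sets of $f=d(\cdot,A)$ for an optimal pair $A,B$, apply a discrete coarea identity bounded by $\D$, and use the closure in the definition of $\diamess$ to guarantee mass at least $1/8$ on both sides of each cut. Your symmetric use of $h\cdot\min(m(V_k),m(V_k^c))\leq|\partial V_k|$ summed over all levels even yields the sharper bound $4\D/(R-1)$, compared with the paper's $56\D/(R-1)$ obtained by selecting a single good level.
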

\begin{proof}
We fist assume $\diamess^{(1/8)} >1$.
Let $A,B$ maximize the expression for $\diamess^{(1/8)}$.
We consider $f:=d(A,\cdot)$.
Then,
\[
\frac 1 8 (\diamess^{(1/8)} -1) \leq (\diamess^{(1/8)} -1)m(\cl(B)) \leq \|f\|_1 = \int_0^\infty m(f>r)dr
\]
Moreover,
\begin{align*}
\D \geq \frac 1 2\sum_{x,y}P(x,y)m(x)|f(y)-f(x)| =\|\nabla f\|_1 =\int_0^\infty |\partial (\{f>r\})|dr
\end{align*}
Hence, there exists $r_0>0$ such that
\[
\frac{|\partial (\{f>r_0\})|}{m(f>r_0)} \leq \frac{8\D}{(\diamess -1)}
\]
As $m(f>r) \leq \frac 7 8$ for all $r>0$, we get
\[
m(f>r) \leq \frac 1 7 m(f \leq r)
\]
giving
\[
h \leq \frac{|\partial (\{f>r_0\})|}{m(f>r_0) \wedge m(f\leq r_0)} \leq \frac{56\D}{(\diamess^{(1/8)} -1)}.
\]
On the other hand, we generally have $h \leq \D$ by choosing a single vertex set. Combining with the estimate above, the claim of the theorem follows easily.
\end{proof}

We remark that the upper and lower bound for $h$ from Corollary~\ref{cor:hlargerdiamess} and Theorem~\ref{thm:hsmallerdiamess} differ by a factor of order $\D/P_0$. The same phenomenon can be seen for the discrete Cheeger-Buser inequality and can be explained by the use of qualitatively different gradient notions for both sides of the estimate.


\subsection{Concentration of measure versus observable diameter} \label{sec:ObsDiam}

The essntial diameter is closely related to the concentration of measure phenomenon.
For a set $A \subset V$, we define
\[
A_r = \{x: d(x,A)\leq  r\}.
\]

In a slightly different setup, the following lemma was shown in \cite[Proposition~2.6]{ozawa2015estimate} and references therein.

\begin{lemma}\label{lem:concentrationDiamess}
Let $G=(V,P,\dc)$ be a reversible metric Markov chain with combinatorial distance $\dc$. Let $r>0$.
Assume at distance $r \in \N$, we have concentration $\eps$, i.e.,\[
m(A_r^c) < \eps
\]  whenever $m(A)\geq \frac 1 2$. Then, 
\[
\diamess^{(\eps)} \leq 2r.
\]
\end{lemma}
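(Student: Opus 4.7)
The plan is a proof by contradiction based on dilating $A$ by $r$ steps and applying the concentration hypothesis either to that dilation or to its complement, whichever carries mass at least $\tfrac12$. Suppose toward a contradiction that $A, B \subseteq V$ satisfy $m(A) \geq \eps$, $m(\cl(B)) \geq \eps$, and $d(A,B) > 2r$, and set $C := A_r$.

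In the first case $m(C) \geq \tfrac12$, the concentration hypothesis applied to $C$ gives $m(C_r^c) < \eps$. A routine geodesic-walking argument in the combinatorial distance shows $(A_r)_r = A_{2r}$, so $m(A_{2r}^c) < \eps$. On the other hand $d(A,B) > 2r$ together with $\cl(B) \subseteq B_1$ forces $d(A,x) \geq d(A,B) - 1 > 2r - 1$ for every $x \in \cl(B)$, hence (via integer granularity of the combinatorial distance) $\cl(B) \subseteq A_{2r}^c$, which contradicts $m(\cl(B)) \geq \eps$.

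In the second case $m(C) < \tfrac12$, we have $m(A_r^c) > \tfrac12$, and applying the concentration hypothesis to $A_r^c$ gives $m((A_r^c)_r^c) < \eps$. The inclusion $A \subseteq (A_r^c)_r^c$ is immediate: any $y \in A_r^c$ has $d(y,A) > r$, so for every $x \in A$ one has $d(x,y) \geq d(y,A) > r$, i.e.\ $d(x, A_r^c) > r$. Hence $m(A) < \eps$, again contradicting admissibility.

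The only delicate point is the boundary accounting in the first case: a vertex $x \in \cl(B) \setminus B$ adjacent to $B$ along a shortest path toward $A$ may satisfy $d(A,x) = 2r$ exactly, placing it on the boundary of $A_{2r}^c$. Handling this edge case cleanly is what makes the strict $< \eps$ in the concentration hypothesis essential, and it is the main (essentially the only) nontrivial combinatorial obstacle in the argument; the second case and the rest of the first case are formal.
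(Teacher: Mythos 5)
Your Case 2 is correct and is exactly the paper's first step: it shows that $m(A)\geq\eps$ forces $m(A_r)\geq\tfrac12$ via the inclusion $A\subseteq (A_r^c)_r^c$. The problem is Case 1, and the issue you flag at the end is not a removable edge case but the generic obstruction; the strictness of $<\eps$ does not repair it. Since $m(\cl(B))\geq\eps$ gives no lower bound on $m(B)$ itself, essentially all of the mass of $\cl(B)$ may sit on the shell $\{x:d(x,A)=2r\}$: take $B=\{b\}$ with $m(b)$ negligible, $d(A,b)=2r+1$, and the neighbours of $b$, which carry the mass $\eps$, all at distance exactly $2r$ from $A$. Then $\cl(B)\cap A_{2r}^c$ can have measure zero, so $m(A_{2r}^c)<\eps$ produces no contradiction with $m(\cl(B))\geq\eps$. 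Strictness of a measure inequality cannot compensate for a failed set inclusion whose exceptional set has no a priori measure control.

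What your Case 1 does yield is that some point of $\cl(B)$ lies in $A_{2r}$, hence $d(A,\cl(B))\leq 2r$ and $d(A,B)\leq 2r+1$; this weaker constant would be harmless in every downstream application of the lemma. To reach $2r$ one needs a unit of slack, which is what the paper's proof manufactures: it dilates $A$ not by $r$ but by the minimal $R+1\leq r$ with $m(A_{R+1})\geq\tfrac12$, applies the concentration hypothesis to $A_{R+1}$ to find $x\in\cl(B)$ with $d(x,A_{R+1})\leq r$, and chains $d(A,B)\leq 1+(R+1)+d(A_{R+1},\cl(B))$. When $R+1<r$ this absorbs the extra step coming from the closure; when $R+1=r$ the same off-by-one reappears, so you should either state the conclusion with $2r+1$ or supply a separate argument for the strict bound $d(A_{R+1},\cl(B))<r$. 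Finally, do record the identity $(A_r)_r=A_{2r}$ you invoke; it is valid here because $d$ is the combinatorial distance, but it is not automatic for general path metrics.
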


\begin{proof}
For simplicity, we write $d=\dc$.
We will throughout use that
\[
A \subseteq (A_R^c)_R^c
\]
and
\[
R+d(A_R,B) \geq  d(A,B)
\]
for all $R>0$ and $A,B \subseteq V$.
Let $A,B$ attain the observable diameter.
We notice $m(A_r)> \frac 1 2 $ by the concentration assumption and as $m(A)\geq \eps$.
Let $R \in \N$ such that $m(A_R)\leq \frac 1 2$ and $m(A_{R+1}) \geq \frac 1 2$.
Then, $R < r$.
As $m(\cl(B)) \geq \eps$, we similarly get
\[
d(A_{R+1},\cl(B))<r.
\]
Thus,
\[
d(A,B) \leq 1 + d(A,\cl(B)) \leq 1+ R+ 1 + d(A_{R+1},\cl(B)) \leq r+R-1 \leq 2r,
\]
finishing the proof.
\end{proof}

\subsection{Gaussian concentration implies Gaussian isoperimetry}

A specific application of 'Reversing the hierarchy' is that on weighted manifolds with non-negative Ricci curvature,
 Gaussian concentration of measure implies Gaussian isoperimetry
 \cite{ledoux2011concentration,milman2009role,
 milman2010Isoperimetric}  
  which is equivalent to a log-Sobolev inequality, see e.g. \cite{ledoux2006isoperimetry}.
Here, we give a discrete version of this result using our general isoperimetric inequality from Corollary~\ref{cor:IsoperimetryConcentrationLargeEps}.
By doing so, we give an affirmative answer to \cite[Conjecture~6.10]{erbar2018poincare} in case of non-negative Ollivier curvature. We now give the details.
The Gaussian isoperimetric constant is defined  as
\[
h_{\sqrt {\log}} = \inf_{m(W)\leq \frac 1 2} \frac{|\partial W|}{m(W)\sqrt{\log(1/m(W))}}
\]

It is shown in
\cite[Remark~5]{houdr2001mixed}
that
\[
\alpha \gtrsim \frac{h_{\sqrt {\log}}^2}{\D}
\]
and in \cite{klartag2015discrete} that in case of non-negative Bakry-Emery curvature,
\[
\alpha \lesssim \frac{ h_{\sqrt {\log}}^2}{P_0}.
\]
As the gradient estimate 
\[
\|\nabla P_t f\|_\infty \lesssim \frac{\|f\|_\infty}{\sqrt{P_0 t}}
\]
is the only consequence of non-negative Bakry Emery curvature needed in the proof, and the same gradient estimate also holds true in case of non-negative Ollivier Ricci curvature, it is expected that the upper bound for $\alpha$ is also valid in case of non-negative Ollivier curvature, so that, up to a factor $P_0$, we have
\[
\alpha \simeq h_{\sqrt {\log}}^2.
\]

\begin{theorem}
Let $G=(V,P,\dc)$ be a reversible metric Markov chain with non-negative Ollivier curvature and combinatorial distance $\dc$.
Assume that for all $A$ with $m(A) \geq \frac 1 2$,
\[
m(A_r) \leq \exp(-\rho r^2).
\]
Then,
\[
h_{\sqrt {\log}} \geq \frac{P_0}{48}\sqrt{\rho}.
\]

\end{theorem}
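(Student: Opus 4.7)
The strategy is a direct two-step composition: convert the Gaussian concentration hypothesis into a bound on the observable diameter via Lemma~\ref{lem:concentrationDiamess}, then apply our general isoperimetric-concentration estimate Corollary~\ref{cor:IsoperimetryConcentrationLargeEps} at the optimal choice $\eps = m(W)/4$.

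First, given $\eps \in (0, 1/8]$, I would choose the smallest positive integer $r$ with $\exp(-\rho r^2) \leq \eps$, so that $r \leq \sqrt{\log(1/\eps)/\rho} + 1$. The Gaussian hypothesis then implies $m(A_r^c) \leq \eps$ whenever $m(A) \geq 1/2$, and Lemma~\ref{lem:concentrationDiamess} yields
\[
\diamess^{(\eps)} \leq 2r \leq 2\sqrt{\log(1/\eps)/\rho} + 2.
\]

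Next, fix $W$ with $m(W) \leq 1/2$ and take $\eps = m(W)/4 \in [m(W)/4, 1/8]$ in Corollary~\ref{cor:IsoperimetryConcentrationLargeEps}. Combining with the observable-diameter bound above gives
\[
\frac{|\partial W|}{m(W)} \geq \frac{P_0 \log(4/m(W))}{24\, \diamess^{(\eps)}} \geq \frac{P_0\sqrt{\rho \log(4/m(W))}}{48 + 48\sqrt{\rho/\log(4/m(W))}}.
\]
Since $\log(4/m(W)) \geq \log(1/m(W))$, dividing both sides by $\sqrt{\log(1/m(W))}$ produces a lower bound for $h_{\sqrt{\log}}$ of the form $\frac{P_0\sqrt{\rho}}{48(1 + o(1))}$.

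The only mild nuance is the additive $+2$ coming from integer rounding of $r$ in Step~1, which is asymptotically negligible in the regime $\rho \log(1/m(W)) \gg 1$; the stated constant $1/48$ then follows after either applying a sharper real-valued version of Lemma~\ref{lem:concentrationDiamess} or absorbing the rounding into the universal constant in the complementary small-$\rho$ range. All the essential geometric machinery, namely the Laplacian separation principle and the chain rule from Section~\ref{sec:ChainRule}, has already been invested into Theorem~\ref{thm:IsoperimetryConcentration}; the hard part is thus behind us, and what remains is pure bookkeeping.
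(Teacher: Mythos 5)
Your proposal follows essentially the same route as the paper: Lemma~\ref{lem:concentrationDiamess} turns Gaussian concentration into $\diamess^{(\eps)} \leq 2\sqrt{\log(1/\eps)/\rho}$, and Corollary~\ref{cor:IsoperimetryConcentrationLargeEps} with $\eps = m(W)/4$ then yields the bound after dividing by $\sqrt{\log(1/m(W))}$. The paper disposes of your $+2$ rounding issue exactly as your first suggested fix: it first reduces to a strict concentration inequality (replacing $\rho$ by $\rho-\eps$ and passing to the limit) and then applies the lemma with the real value $r=\sqrt{\log(1/\eps)/\rho}$, so the clean constant $P_0\sqrt{\rho}/48$ comes out directly.
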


\begin{proof}
Without loss of generality, we can assume that the concentration inequality is strict, otherwise, we could replace $\rho$ by $\rho - \eps$, and at the end take the limit $\eps \to 0$.
By Lemma~\ref{lem:concentrationDiamess}, we have
\[
\diamess^{(\eps)} \leq 2 \sqrt{\frac{\log(1/\eps)}{\rho}}.
\]
Let $W \subset V$ with $m(W) \leq \frac 1 2$.
By Corollary~\ref{cor:IsoperimetryConcentrationLargeEps}  with $\eps = m(W)/4$,
\[
\frac{|\partial W|}{P_0 m(W)} \geq \frac{\log (1/\eps)}{24\diamess^{(\eps)}} \geq \frac{\sqrt{\rho}}{48}  \sqrt{\log(1/\eps)} \geq  \frac{\sqrt{\rho}}{48}  \sqrt{\log(1/m(W))}
\]
and the claim follows by rearranging.
\end{proof}

As a corollary, we obtain that Gaussian concentration implies a log-Sobolev inequality for non-negatively curved Markov chains. 

\begin{corollary}
Let $G=(V,P,\dc)$ be a reversible metric Markov chain with non-negative Ollivier curvature and combinatorial distance $\dc$.
Assume that for all $A$ with $m(A) \geq \frac 1 2$ and all $r \in \N$,
\[
m(A_r) \leq \exp(-\rho r^2).
\]
Then, 
\[
\alpha \geq CP_0^2 \rho
\]
for some universal constant $C$.
\end{corollary}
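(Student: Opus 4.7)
The corollary is a direct concatenation of the theorem immediately preceding it (Gaussian concentration $\Rightarrow$ Gaussian isoperimetry) with the log-Cheeger inequality quoted in the introduction from \cite{houdr2001mixed}, so the plan is short and consists mostly of bookkeeping of constants.

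First, I would apply the preceding theorem verbatim. The hypotheses of the corollary (non-negative Ollivier curvature, combinatorial distance, and Gaussian concentration $m(A_r)\leq \exp(-\rho r^2)$ for every $A$ with $m(A)\geq 1/2$) are exactly the hypotheses of that theorem, so we obtain
\[
h_{\sqrt{\log}} \geq \frac{P_0}{48}\sqrt{\rho}.
\]
This is the only nontrivial step, but it is already finished.

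Second, I would invoke the log-Cheeger inequality $\alpha \geq c\, h_{\sqrt{\log}}^2$ that holds for any reversible Markov chain, stated in the introduction with reference to \cite[Remark~5]{houdr2001mixed}. Squaring the bound on $h_{\sqrt{\log}}$ and substituting into this inequality yields
\[
\alpha \;\geq\; c\, h_{\sqrt{\log}}^2 \;\geq\; \frac{c}{48^2}\, P_0^{\,2}\, \rho,
\]
which is the claim with $C = c/48^2$.

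There is no real obstacle here: the corollary only combines two ingredients that are already available. The only point that deserves attention is the direction of the log-Cheeger inequality used. One wants the \emph{lower} bound $\alpha \gtrsim h_{\sqrt{\log}}^2$, which does \emph{not} require a curvature hypothesis; the matching \emph{upper} bound $\alpha \lesssim h_{\sqrt{\log}}^2/P_0$ of \cite{klartag2015discrete} is the one that uses non-negative curvature and is not needed here. Once this is observed, the rest is just tracking constants, and the proof is a two-line deduction.
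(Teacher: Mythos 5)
Your proposal is correct and is exactly the (implicit) argument the paper intends: the corollary is stated without proof as an immediate consequence of the preceding theorem combined with the log-Cheeger inequality $\alpha \gtrsim h_{\sqrt{\log}}^2$ from \cite{houdr2001mixed}. The only point worth flagging is that the version of that inequality quoted just before the theorem carries a factor $1/\D$, so the "universal constant" claim implicitly uses that $\D\leq 1$ for a normalized Markov kernel (or the $\D$-free form quoted in the introduction), which you correctly identify as the curvature-free direction of the log-Cheeger/Buser pair.
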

We remark that the dependence on the minimal transition rate $P_0$ is not avoidable, as Markov chains with curvature at least $K>0$ admit Gaussian concentration with $\rho = K$, see \cite[Theorem~3.1]{jost2019Liouville}. However, the log-Soblev constant cannot be lower bounded purely in terms of the curvature, as we showed in the example in Section~\ref{sec:CounterExample}.

\subsection{Isoperimetry and diameter of subsets}

So far, we gave isoperimetric estimates in terms of the observable diameter of the whole space. In this subsection, we give isoperimetric estimates in terms of the diameter of the subset.
For a subset $W \subset V$, we write
\[
\diam(W) = \max_{x,y \in W} d(x,y)
\]
where $d$ is the original graph distance (and not the distance within the induced subgraph).

\begin{theorem}\label{thm:internalDiameter}
Let $G=(V,P,d)$ be a metric Markov chain graph with non-negative Ollivier curvature. Let $W \subset V.$ Then,
\[
|\partial W| \geq \frac{P_0}{\diam(\cl(W))} m(W)(1-m(W)).
\]
\end{theorem}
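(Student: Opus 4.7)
The plan is to combine a direct edge-counting bound, which handles sets $W$ with a thick vertex boundary, with a Laplacian separation argument based on Theorem~\ref{thm:ConstLaplacian} and the chain rule of Lemma~\ref{lem:LocalChainRule}, which handles sets with a thin vertex boundary.

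Write $K_{\mathrm{in}}$ for the inner vertex boundary of $W$, $K_{\mathrm{out}}$ for its outer vertex boundary, and $D=\diam(\cl(W))$. For each $x\in K_{\mathrm{in}}$ choose one neighbor $y_x\notin W$; both $x$ and $y_x$ lie in $\cl(W)$, so $d(x,y_x)\leq D$. Using the definition $P_0\leq P(x,y_x)d(x,y_x)^2$, the contribution of this edge to $|\partial W|$ is $m(x)P(x,y_x)d(x,y_x)\geq m(x)P_0/D$. Summing over distinct $x\in K_{\mathrm{in}}$ produces distinct edges and yields $|\partial W|\geq (P_0/D)m(K_{\mathrm{in}})$; the symmetric argument gives $|\partial W|\geq (P_0/D)m(K_{\mathrm{out}})$. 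If either $m(K_{\mathrm{in}})$ or $m(K_{\mathrm{out}})$ is at least $m(W)(1-m(W))$, the theorem follows.

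Otherwise both are smaller, which forces $m(X)\geq m(W)^2$ and $m(Y)\geq (1-m(W))^2$ where $X=W\setminus K_{\mathrm{in}}$ and $Y=W^c\setminus K_{\mathrm{out}}$; in particular $\max(m(X),m(Y))\geq 1/4\geq m(W)(1-m(W))$. In this regime I would apply Theorem~\ref{thm:ConstLaplacian} with $K=K_{\mathrm{in}}\cup K_{\mathrm{out}}=\supp(\Delta 1_W)$ to obtain $f\in\Lip(1)$ with $\Delta f\equiv C$ on $K$, $\Delta f\geq C$ on $W$, $\Delta f\leq C$ on $W^c$, shifted so that $0\leq f|_K\leq D$. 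The trivial integrations $|\partial W|\geq \int_W\Delta f\,dm\geq Cm(W)$ and $|\partial W|\geq -\int_{W^c}\Delta f\,dm\geq -C(1-m(W))$ dispose of the case $|C|\geq P_0/D$.

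For $|C|<P_0/D$ I would use the concave quadratic $\phi(s)=s-s^2/(2D)$ on $[0,D]$, extended outside so that $\phi\in\Lip(1)$, and invoke Lemma~\ref{lem:LocalChainRule} (valid because $\phi''=-1/D$, $\phi'''=0\geq 0$) to get
\[
\Delta(\phi\circ f)\;\leq\;\phi'(f)\,\Delta f\;-\;\frac{P_0}{2D}(\nabla_- f)^2.
\]
On $Y$, the maximal Lipschitz extension formula $f(y)=\min_{z\in K}(f(z)+d(y,z))$ together with a geodesic step from $y$ to the minimizing $z$ gives $\nabla_- f\geq 1$, and combined with $|\phi'(f)\Delta f|\leq P_0/D$ this yields $\Delta(\phi\circ f)\leq -P_0/(4D)$ on $Y$. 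Symmetrically, applying the construction to $D-f$ on $X$ (where the minimal extension property gives $\nabla_+ f\geq 1$) produces $\tilde g\in\Lip(1)$ with $\Delta\tilde g\leq -P_0/(4D)$ on $X$. Integrating $-\Delta(\phi\circ f)$ over $W^c$ and $-\Delta\tilde g$ over $W$, and absorbing the contributions from $K$ using the smallness $m(K_{\mathrm{in}})+m(K_{\mathrm{out}})<2m(W)(1-m(W))$ already in force, I obtain $|\partial W|\gtrsim (P_0/D)\max(m(X),m(Y))$, which closes the argument up to universal constants. The principal technical obstacle is this absorption on $K$, where the chain-rule correction $-(P_0/(2D))(\nabla_- f)^2$ is not guaranteed to help; extracting the sharp constant $P_0/D$ stated in the theorem (rather than a universal fraction of it) will require a more delicate choice of $\phi$ together with a finer bookkeeping of the boundary contributions.
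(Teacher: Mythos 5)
Your first step (edge counting when the vertex boundary is thick) and the reduction to a thin boundary are fine, but the core of the thin-boundary case contains a genuine gap: the claimed inequality $\Delta(\phi\circ f)\leq -P_0/(4D)$ on all of $Y=W^c\setminus K_{\mathrm{out}}$, and hence the target bound $|\partial W|\gtrsim (P_0/D)\,m(Y)$, are both false. On $Y$ the function $f$ is the \emph{maximal} Lipschitz extension, $f(y)=\min_{z\in K}(f(z)+d(y,z))$, which grows linearly in $d(y,\cl(W))$ and is therefore not bounded in terms of $D=\diam(\cl(W))$. Your $\phi$ is constant on $[D,\infty)$, so on $Y\cap\{f>D\}$ the composition $\phi\circ f$ is locally constant and one only gets $\Delta(\phi\circ f)\leq 0$ there, not $\leq -P_0/(4D)$; moreover $m(Y\cap\{f\leq D\})$ can be an arbitrarily small fraction of $m(Y)$. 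Concretely, for a long cycle on $n$ vertices with $W$ an arc of five vertices one has $m(K_{\mathrm{in}}),m(K_{\mathrm{out}})<m(W)(1-m(W))$, $D=6$, $m(Y)\to 1$, yet $|\partial W|=O(1/n)$, so $|\partial W|\gtrsim (P_0/D)\max(m(X),m(Y))$ cannot hold. (A smaller slip in the same step: $\phi'(f)\Delta f-\tfrac{P_0}{2D}(\nabla_-f)^2\leq \tfrac{P_0}{D}-\tfrac{P_0}{2D}=+\tfrac{P_0}{2D}$, not $-\tfrac{P_0}{4D}$.) The $X$-side of your dichotomy does not suffice on its own either: for small $m(W)$ your threshold only secures $m(X)\geq m(W)^2$, while the theorem requires a bound proportional to $m(W)(1-m(W))\approx m(W)$.

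The paper's proof avoids both problems by applying Theorem~\ref{thm:ConstLaplacian} with $Y=W$ and $K$ the \emph{outer} vertex boundary, so that the chain rule (Lemma~\ref{lem:GlobalChainRule}) is used only on $W\subseteq\cl(W)$, where $f$ automatically lies in a window of length $R=\diam(\cl(W))$ and the improved differential inequality holds at every vertex; on the complement it uses only the trivial linear bound $|\partial W|\geq\mathcal E(1_{V\setminus W},-f)\geq C(1-m(W))$, with no chain rule and hence no unboundedness issue. The two resulting estimates, one proportional to $m(W)$ with slope $a=C/(e^{\beta R}-1)$ and one proportional to $1-m(W)$ with slope $b=C$, are then combined through the monotonicity of the harmonic mean, giving $\frac{1}{|\partial W|}\,m(W)(1-m(W))\leq\frac{1}{a+b}=\frac{1-e^{-\beta R}}{C}\leq\frac{2R}{P_0}$, which yields exactly the product $m(W)(1-m(W))$ without ever needing a bound proportional to $\max(m(X),m(Y))$. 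To salvage your route you would have to replace the $Y$-side chain-rule estimate by this linear bound on the complement and replace the maximum by the harmonic-mean combination.
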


\begin{proof}
We apply the Laplacian separation principle (Theorem~\ref{thm:ConstLaplacian}) to $Y=W$ and $K=\supp 1_W  \setminus W$ and $X=V\setminus W \setminus K$.
We obtain a function $f$ with $\Delta f = C$ on $K$.
Let $R=\cl(W) = Y \cup K$
We can assume that $0 \leq f \leq R$ on $Y \cup K$.

We first assume $C>0$.
By Lemma~\ref{lem:GlobalChainRule}, we get
\[
\Delta g \leq -\frac{C}{\exp(\beta R) - 1} \quad \mbox{ on } Y
\]
with $\beta = 2C/P_0$ and some $g \in \Lip(1)$.
Hence,
\[
|\partial W| \geq \mathcal E(1_W,g) \geq m(W)\frac{C}{\exp(\beta R) - 1}.
\]
On the other hand,
\[
|\partial W| \geq \mathcal E(1_{V\setminus W},-f) \geq C(1-m(W))
\]
Thus,
\begin{align*}
\frac 1 {\partial W} \cdot 
\frac{m(W)(1-m(W))}{m(W)+(1-m(W))} \leq  \frac{\frac{\exp(\beta R) - 1}{C}\cdot \frac 1 C}{\frac{\exp(\beta R) - 1}{C}+ \frac 1 C} = \frac{1-\exp(-\beta R)}{C}
\leq \frac {2R}{P_0}.
\end{align*}
Rearranging gives
\[
|\partial W| \geq \frac{P_0}{2R}m(W)(1-m(W)).
\]
In the case $C\leq 0$, we again use Lemma~\ref{lem:GlobalChainRule} to get $g \in \Lip(1)$ with
\[
\Delta g \leq - \frac{P_0}{2R} \mbox{ on } W
\]
and hence,
\[
|\partial W| \geq \frac{P_0}{2R} m(W).
\]
and the desired estimate follows easily.
This finishes the case distinction and thus the proof.
\end{proof}

\section*{Acknowledgments}
The author wants to thank Yong Lin for pointing out the question if the Dirichlet eigenvalue on balls of radius $R$ of infinite graphs is at least $1/R^2$. This question turned out to be the missing puzzle piece for finding the proof of the isoperimetric inequalities. The author also wants to thank Emanuel Milman for fruitful discussions.

\printbibliography

	Florentin Münch, \\
	MPI MiS Leipzig, 04103 Leipzig, Germany\\
	\texttt{florentin.muench@mis.mpg.de}\\

\end{document}